\newcommand{\COM}[1]{}
\renewcommand{\theequation}{\arabic{section}.\arabic{equation}}
\newtheorem{theorem}{Theorem}[section]
\newtheorem{lemma}{Lemma}[section]
\newtheorem{proposition}{Proposition}[section]
\newtheorem{corollary}{Corollary}[section]
\newtheorem{remark}{\normalfont\scshape Remark}[section]
\newenvironment{proof}{\noindent\textsc{Proof.\/}}{}
\newcommand{\subj}[2]{\textsf{AMS 2000 subject classifications.}
Primary {#1}; Secondary {#2}.\newline}
\newcommand{\key}[1]{\textsf{Keywords and phrases.} {#1}.\newline}
\newcommand{\abb}[1]{\textsf{Abbreviated title.} {#1}.}
\newcommand{\fot}[5]{\renewcommand\thefootnote{}
\footnotetext{\parindent=0.0mm \vskip-3mm \subj{#1}{#2}\key{#3}\abb{#4}
\newline\textsf{Date.} \date{\today}}}
\def\vsb{\hfill$\Box$}
\def\vsp{\vskip-8mm\hfill$\Box$\vskip3mm}
\newcommand{\veps}{\varepsilon}
\newcommand{\be}{\begin{equation}}
\newcommand{\ee}{\end{equation}}
\newcommand{\bea}{\begin{eqnarray}}
\newcommand{\eea}{\end{eqnarray}}
\newcommand{\beaa}{\begin{eqnarray*}}
\newcommand{\eeaa}{\end{eqnarray*}}
\newcommand{\beal}{\begin{aligned}}
\newcommand{\eeal}{\end{aligned}}
\newcommand{\var}{\mathrm{Var\,}}
\newcommand{\cov}{\mathrm{Cov\,}}
\newcommand{\sumk}{\sum^n_{k=1}}
\newcommand{\ttt}[1]{\quad\mbox{ #1}\quad}
\newcommand{\asto}{\stackrel{a.s.}{\to}}
\newcommand{\pto}{\stackrel{p}{\to}}
\newcommand{\dto}{\stackrel{d}{\to}}
\newcommand{\nifi}{n\to\infty}
\newcommand{\erw}{elephant random walk}
\newcommand{\cfn}{{\cal F}_n}
\newcommand{\cfnm}{{\cal F}_{n-1}}
\newcommand{\cgn}{{\cal G}_n}
\newcommand{\mem}{{\mathfrak M}}
\newcommand{\xnp}{X_{n+1}}\newcommand{\xnm}{X_{n-1}}
\newcommand{\tnp}{T_{n+1}}\newcommand{\tnm}{T_{n-1}}
\newcommand{\tnz}{T_n^2}
\newcommand{\tnpz}{T_{n+1}^2}
\newcommand{\exnpf}{E(X_{n+1}\mid \cfn)}
\newcommand{\etnpf}{E(T_{n+1}\mid {\cal F}_n)}
\begin{document}
\date{}
\title{\textsf{Variations of the elephant random walk}}
\author{Allan Gut\\Uppsala University \and Ulrich Stadtm\"uller\\
Ulm University}
\maketitle

\begin{abstract}\noindent
In the classical simple random walk the steps are independent, viz., the walker has no memory. In contrast, in the \erw, which was introduced by Sch\"utz and Trimper \cite{erwdef} in 2004,  the walker remembers the whole past, and the next step always depends on the whole path so far. Our main aim is  to prove analogous results when the elephant has only a restricted memory, for example remembering only the most remote step(s), the most recent step(s) or both. We also extend the models to cover more general step sizes.
\end{abstract}

\fot{60F05, 60G50,}{60F15, 60J10}{Elephant random walk, law of large numbers, asymptotic (non)normality, method of moments, difference equation, Markov chain}
{Elephant random walk}

\section{Introduction}
\setcounter{equation}{0}
\markboth{A.\ Gut and U.\ Stadtm\"uller}{Elephant random walk}

In the classical \emph{simple\/} random walk the steps are equal to plus or minus one and independent---$P(X=1)=1-P(X=-1)=p$, ($0<p<1$). In this model the walker has no memory. This random walk is, in particular, Markovian. Motivated by applications, although interesting in its own right, is the case when the walker has some memory. The extreme case is, of course, when the walker has a complete memory, that is, when ''the next step'' depends on the whole process so far. This so called \erw\ (ERW) was introduced by Sch\"utz and Trimper \cite{erwdef} in 2004,  the name being inspired by the fact that elephants have a very long memory.

The first, more substantial, paper on \erw s is, to the best of our knowledge, Bercu's paper \cite{bercu}, in which he proves a number of limit theorems. A main point is that there is a kind of phase transition at the point $p=P(X=1)=3/4$, which divides the problem into the diffusive regime, $0\leq p<3/4$, the critical regime, $p=3/4$, and the superdiffusive regime, $3/4<p\leq1$, with somewhat different asymptotics.

A main device in his paper is the use of martingale theory due to the observation that a multiplicative scaling of the random walk constitutes a martingale.

Our main interest is the situation in which the elephant has only a limited memory, either that he or she remembers only some distant past, only a recent past or a mixture of both. No paper on exact results and proofs seems to exist, only simulations, in which case a given fraction of the distant/recent past is remembered; \cite{scv, csv,moura}.  

The first task in this direction is to consider the cases when the walker only remembers the first (two) step(s) or only the most recent (previous) step. In particular the latter case involves rather cumbersome computations and we therefore invite the reader(s) to try to push our results further. It should also be mentioned that the paper by Engl\"ander and Volkov \cite{volkov} is devoted to this latter case, although from a different angle, in that the next step is not generated by flipping a coin, rather by turning it over or not. They have a somewhat different focus, in particular, they consider the case with different $p$-values in each step. 

The cases with limited memory behave very differently mathematically in that some of the walks are still non-markovian others are markovian, but there is no convenient martingale around. Moreover there are no phase transitions in these cases.

A second point concerns  the extension of (some of) Bercu's results in \cite{bercu} from the simple random walk to general sums, that is,  to the case when the steps have an arbitrary distribution on the integers.

We begin by defining the various models in Section \ref{defs}. After some preliminaries in Section \ref{aux},  some results  for general ERW:s are obtained in Section \ref{allgemein}.  Sections \ref{anfang1} and \ref{anfang2} are devoted to the distant  past and Sections \ref{ende1} and \ref{ende2} to the recent past, respectively. These ''one-sided'' memories are then followed up in Sections \ref{beidesa} and \ref{beidesb} where we consider mixed cases, that is, when the memory contains some early steps as well as some recent ones, after which we shortly discuss some different models. We close with a section containing some  questions and remarks. For easier reading we collect some of the somewhat more lengthy (elementary and tedious) computations in the Appendix.

\section{Background}
\setcounter{equation}{0}\label{defs}
The \erw\ is defined as a simple random walk, where, however, the steps are not i.i.d.\ but dependent as follows. The first step $X_1$ equals 1 with probability $r\in [0,1]$ and is equal to $-1$ with probability $1-r$. After $n$ steps, that is, at position $S_n=\sumk X_k$, one defines
\[\xnp=\begin{cases} +X_{K},\ttt{with probability} p\in[0,1], \\-X_{K},\ttt{with probability} 1-p,\end{cases}\]
where $K$ has a uniform distribution on the integers $1,2,\ldots,n$. With $\cgn=\sigma\{X_1,X_2,\ldots,X_n\}$ this means (formula (2.2) of \cite{bercu}) that  
\bea
E(\xnp\mid\cgn)=(2p-1)\cdot\frac{S_n}{n},\label{21}\eea
 after which, setting  $a_n=\Gamma(n)\cdot\Gamma(2p)/\Gamma(n+2p-1)$, it turns out that
$\{M_n=a_nS_n,\,n\geq 1\}$ is a martingale.

\smallskip
Our main aim is  to extend these results to the case when the elephant has only a restricted memory, for example remembering only the most remote step(s) and/or the most recent one(s). A result in Section \ref{allgemein} allows us to conclude that our results remain true (suitably modified) also when the steps of the ERW:s follow a general distribution on the integers.

First in line is the case when the elephant only remembers the distant past, the most extreme one being when the memory is reduced to the first step only, viz.,
\[\xnp=\begin{cases} +X_{1},\ttt{with probability} p\in[0,1], \\-X_{1},\ttt{with probability} 1-p.\end{cases}\]
Somewhat more sophisticated is when the memory covers the first two steps, in which case
\[\xnp=\begin{cases} +X_{K},\ttt{with probability} p\in[0,1], \\-X_{K},\ttt{with probability} 1-p,\end{cases}\]
where $P(K=1)=P(K=2)=1/2$.
\smallskip

Technically more complicted is when the elephant only remembers the recent past. Here we focus on the very recent past, which is the last step, that is,
\[\xnp=\begin{cases} +X_{n},\ttt{with probability} p\in[0,1], \\-X_{n},\ttt{with probability} 1-p.\end{cases}\]

\smallskip
We begin, throughout,  by assuming that $X_1=1$, and generalize our findings in this setting (for simplicity) to the case  $r=p$. We denote our partial sums with $T_n$, $n\geq1$, when the first variable(s) is/are fixed and let $S_n$ be reserved for the case when they are random.
\smallskip

In order to move from $T_n$ to $S_n$ we also need to discuss the behavior of the walk when the initial value equals $-1$. However, in that case the evolution of the walk is the same \emph{except\/} for the fact that the trend of the walk is reversed, viz., the corresponding walk equals the mirrored image in the time axis. This implies that the mean after $n$ steps equals $-E(T_n)$, but the dynamics being the same, implies that the variance remains the same ($\var( -Y)=\var( Y)$ for a random variable $Y$). In fact, the second moments of the walk remain the same. The same goes for higher order moments---odd moments equal the negative of those when $X_1=1$, and even moments remain the same. In Sections \ref{anfang2} and \ref{beidesb} we depart from the assumption that $X_1$ and $X_2$ are fixed, and then the additional case $X_1+X_2=0$ has to be taken care of.

\smallskip
Finally, in order to avoid special effects we assume throughout that $0<p<1$; note that $p=1$ corresponds to $X_n=X_1$ for all $n$, and $p=0$ the the case of alternating summands.

\section{Some auxiliary material}
\setcounter{equation}{0}\label{aux}

For easier access of the arguments below we shortly present some auxiliary results from probability and analysis.

 \subsection{Disturbed limit distributions}\label{crsl}

The following (well-known) result (which is a special case of the Cram\'er--Slutsky theorem) will be used in order to go from a special case to a more general one.

\begin{proposition}\label{knep} Let $\{U_n,\,n\geq1\}$ be a sequence of random variables and suppose that $V$ is independent of all of them. If $U_n\dto U$ as $\nifi$, then $U_nV\dto UV$ as $\nifi$.
\end{proposition}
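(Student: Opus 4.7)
The plan is to use characteristic functions, since convergence in distribution on $\mathbb{R}$ is (by L\'evy's continuity theorem) equivalent to pointwise convergence of characteristic functions, and products are naturally handled by conditioning on one factor.

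First I would note that, since $V$ is independent of each $U_n$, conditioning on $V$ yields
\[
\varphi_{U_nV}(t) \;=\; E\!\left[e^{itU_nV}\right] \;=\; E\!\left[E\!\left[e^{itU_nV}\mid V\right]\right] \;=\; E\!\left[\varphi_{U_n}(tV)\right],
\]
for every $t\in\mathbb{R}$, where $\varphi_{U_n}$ denotes the characteristic function of $U_n$. The hypothesis $U_n\dto U$ together with L\'evy's continuity theorem gives $\varphi_{U_n}(s)\to\varphi_U(s)$ for every $s\in\mathbb{R}$; in particular, for each fixed $t$, $\varphi_{U_n}(tV(\omega))\to\varphi_U(tV(\omega))$ for every $\omega$.

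Second, since $|\varphi_{U_n}(tV)|\leq 1$ uniformly, dominated convergence yields
\[
E\!\left[\varphi_{U_n}(tV)\right] \;\longrightarrow\; E\!\left[\varphi_U(tV)\right] \qquad\text{as } \nifi.
\]
We may realise $U$ on an extension of the underlying probability space so that $U$ is independent of $V$; with this convention $E[\varphi_U(tV)]=E[E[e^{itUV}\mid V]]=E[e^{itUV}]=\varphi_{UV}(t)$. Combining the two displays gives $\varphi_{U_nV}(t)\to\varphi_{UV}(t)$ pointwise, and a second appeal to L\'evy's continuity theorem delivers $U_nV\dto UV$.

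The only subtle point is the last identification $E[\varphi_U(tV)]=E[e^{itUV}]$, which requires $U$ to be independent of $V$ on a common probability space, a fact that is merely a matter of enlarging the space if necessary (and in any case only affects the distribution of $UV$, which is all we care about). The rest is routine: the proof is essentially just Fubini plus dominated convergence at the level of characteristic functions. An equivalent variant would be to establish joint convergence $(U_n,V)\dto(U,V)$ directly through the factorised characteristic function $\varphi_{U_n}(s)\varphi_V(t)\to\varphi_U(s)\varphi_V(t)$ and then apply the continuous mapping theorem to the continuous map $(x,y)\mapsto xy$; I would mention this alternative route briefly as it makes the structure particularly transparent.
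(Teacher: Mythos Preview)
Your proof is correct and follows essentially the same route as the paper: condition on $V$, write $\varphi_{U_nV}(t)=E[\varphi_{U_n}(tV)]$, pass to the limit by dominated convergence, and invoke the continuity theorem. The paper's version is terser and leaves implicit the point you flag about realising $U$ independently of $V$; your added remark on this and the alternative continuous-mapping argument are welcome clarifications but do not change the underlying strategy.
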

\begin{proof} Using characteristic functions and bounded convergence we have, as $\nifi$,
\beaa
\varphi_{U_nV}(t)&=&E\exp\{itU_nV\}=E\Big(E(\exp\{itU_nV\}\mid V)\Big)=E\varphi_{U_n}(tv)\to E\varphi_{U}(tv)\\
&=&E\Big(E(\exp\{itUV\}\mid V)\Big)=E\exp\{itUV\})=\varphi_{UV}(t).
\eeaa
An application of the continuity theorem for characteristic functions finishes the proof.\vsb\end{proof}

\subsection{Conditioning in case of a restricted memory}
\label{xnk}
Let $\{S_n,\,n\geq1\}$ be an ERW, let $\{\cfn,\,n\geq1\}$ denote the   $\sigma$-algebras generated by the memory of the elephant and let $\cgn= \sigma\{X_1,X_2,\ldots,X_n\}$ stand for  the full memory. We already know from (\ref{21}) above that $E(\xnp\mid\cgn)=(2p-1)S_n/n$. Our aim is to establish analogs when the elephant has a restricted memory, that is, analogs for $E(\xnp\mid\cfn)$. 

Toward the end, let $I_n=\{i\leq n: i \in \mem\}$, where $\mem =$ the memory of the elephant. Then,
\bea\label{uli}
E(\xnp\mid \cfn)=p\cdot \sum_{i\in I_n}\frac1{|I_n|}X_i + (1-p)\cdot\sum_{i\in I_n}\frac1{|I_n|}(-X_i)
=(2p-1)\cdot\frac{\sum_{i\in I_n}X_i}{|I_n|},
\eea
that is, the conditional mean equals the average of the possible choices multiplied by the expected value of the sign;  in analogy with (\ref{21}).

If, for example, $I_n=\{n\}$  the elephant only remembers the most recent step, and $I_n=\{1,n\}$ means that he/she only remembers the first and the most recent steps; these are two cases that will be considered in the sequel. In these cases (\ref{uli}) states that
\[E(\xnp\mid \cfn)=(2p-1)X_n\ttt{and} E(\xnp\mid \cfn)=(2p-1)\frac{X_1+X_n}{2},\]
respectively. 

The next problem is when we condition on steps that are not contained in the memory. In words, if they do not, the elephant does not remember them, and, hence, cannot choose among them in a following step. More precisely, mathematically $\mem$ is defined as those steps in the past on which the elephant bases the next step. Technically, let $I\subset \{1,2,\ldots,n\}$ be an arbitrary set of indices, such that $I\cap I_n=\emptyset$. Then
\bea\label{uli2}
E(\xnp\mid \sigma\{I_n\cup I\})=E(\xnp\mid\cfn)=(2p-1)\frac{\sum_{i\in I_n}X_i}{|I_n|}.\eea
It follows, in particular, that
\bea\label{uli3}
E(\xnp\mid \cgn)=(2p-1)\frac{\sum_{i\in I_n}X_i}{|I_n|},\eea
and that
\bea\label{uli4}
E(S_n\xnp\mid \cgn)=
S_nE\big(\xnp\mid\cgn\big)
=S_n(2p-1)\frac{\sum_{i\in I_n}X_i}{|I_n|}.\eea
This, and the fact that $\xnp^2=1$, will be useful several times for the computation of second moments as follows:
\bea\label{uli5}
E(S_{n+1}^2)&=&E(S_n+\xnp)^2=E(S_n^2) +2E(S_n\xnp)+E(\xnp^2)\nonumber\\
&=&E(S_n^2)+\frac{2(2p-1)}{|I_n|}E\Big(S_n\sum_{i\in I_n}X_i\Big) +1.
\eea

\subsection{Difference equations}\label{pfdifferences}  
In the proofs we use several difference equations. For convenience and easy reference we summarize here some well-known facts about linear difference equations that are used on and off.
\begin{proposition}\label{diff}
\emph{(i)} Consider the first order equation
\[x_{n+1}=a\, x_n +b_n, \ttt{for} n \ge 1, \ttt{with} x^*_1 \ttt{given.}\]
Then
\[x_n=a^{n-1}x^*_1+\, \sum_{\nu=0}^{n-2}a^\nu b_{n-1-\nu}.\]
If, in addition, $|a|<1$ and $b_n=bn^\gamma$ with $\gamma>-1$, then
\[x_n=\frac{b_{n-1}}{1-a} -\frac{\gamma ab_{n-1}}{n(1-a)^2}\big(1+o(1)\big)\ttt{as}\nifi.\]
\emph{(ii)} If, in particular, $|a|<1$ and $x_{n+1}=ax_n+b$, then
\[x_n=\frac{b}{1-a}+a^{n-1}\big(x_1^*-\frac{b}{1-a}\big)=\frac{b}{1-a}\big(1+o(1)\big)\ttt{as}\nifi.\]
\emph{(iii)} Next is the homogeneous, second order equation
\[x_{n+1}=a\,x_n+b\,x_{n-1}, \ttt{for} n \ge 2,\ttt{with} x^*_1,\,x^*_2 \ttt{given.}\]
Then, with $\lambda_{1/2}= (a \pm \sqrt{a^2+4b})/2$, provided $a^2+4b\not=0$, 
\[x^h_n=c_1\lambda_1^n+c_2 \lambda_2^n \ttt{with} c_1,\,c_2 \ttt{chosen such that} x^h_i=x^*_i \ttt{for}i=1,2\,. \]
\emph{(iv)} As for the inhomogeneous second order equation
\[x_{n+1}=a\,x_n+b\,x_{n-1}+d_n, \ttt{for} n \ge 2, \ttt{with} x^*_1,\,x^*_2 \ttt{given,}\]
we have $x_n=x^h_n+y_n$, where $y_n$ is some solution of the inhomogeneous equation, where the constants $c_1,c_2$ in $x^h_n$ are chosen properly.
If $d_n\equiv d$ and $a+b\not=1$ we may choose $y_n=d/(1-a-b)$. 
\end{proposition}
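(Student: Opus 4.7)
The statement collects four well-known facts about linear difference equations, so the plan is to verify each part by elementary means and to flag where any delicate analysis is needed.

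For part (i) I would first confirm the closed-form expression for $x_n$ by induction on $n$: the base case $n=1$ reduces to $x_1=x_1^*$ with an empty sum, and the inductive step multiplies the formula for $x_n$ by $a$, adds $b_n$, and shifts the summation index by one. For the asymptotic statement under $|a|<1$ and $b_n=bn^\gamma$ with $\gamma>-1$, the idea is to use the Taylor expansion $(n-1-\nu)^\gamma=(n-1)^\gamma-\gamma\nu(n-1)^{\gamma-1}+O\bigl(\nu^2(n-1)^{\gamma-2}\bigr)$, valid for $\nu$ small compared with $n$, and to integrate this termwise against the absolutely convergent series $\sum_{\nu\ge 0}a^\nu=1/(1-a)$ and $\sum_{\nu\ge 0}\nu a^\nu=a/(1-a)^2$. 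The leading contribution produces $b_{n-1}/(1-a)$, and the first-order correction gives the stated $-\gamma a b_{n-1}/(n(1-a)^2)$. Part (ii) is then an immediate specialization with $b_n\equiv b$, since $\sum_{\nu=0}^{n-2}a^\nu=(1-a^{n-1})/(1-a)$ rearranges to the claimed formula.

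Parts (iii) and (iv) are the classical characteristic-polynomial argument. For (iii) I would make the ansatz $x_n=\lambda^n$, which yields $\lambda^2-a\lambda-b=0$ with roots $\lambda_{1,2}=(a\pm\sqrt{a^2+4b})/2$; when $a^2+4b\ne 0$ the two roots are distinct, so $\{\lambda_1^n,\lambda_2^n\}$ spans the two-dimensional solution space and the constants $c_1,c_2$ are uniquely determined by matching $x_1^*$ and $x_2^*$. For (iv), linearity of the difference operator reduces the inhomogeneous problem to finding any particular solution plus a homogeneous solution from (iii). In the case $d_n\equiv d$, the constant ansatz $y_n\equiv d/(1-a-b)$ is verified by direct substitution (requiring $a+b\ne 1$), and the constants $c_1,c_2$ are then readjusted so that the total satisfies the prescribed initial data.

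The only step with any genuine subtlety is the asymptotic expansion in (i): the Taylor remainder has to be controlled uniformly in $\nu$. The cleanest route is to split the summation at $\nu\approx\sqrt n$, treating the main range by the Taylor expansion above and bounding the remaining tail by $\sum_{\nu\ge\sqrt n}|a|^\nu n^{\max(0,\gamma)}=O\bigl(n^{\max(0,\gamma)}|a|^{\sqrt n}\bigr)$, which is super-polynomially small and therefore negligible relative to the claimed $O(n^{\gamma-1})$ correction. Everything else is bookkeeping.
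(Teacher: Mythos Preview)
Your proposal is correct. Note, however, that the paper does not actually prove this proposition: it is presented in Section~\ref{pfdifferences} as a collection of ``well-known facts about linear difference equations'' and is simply stated without argument. Your write-up therefore supplies more detail than the paper itself, and the approach you outline---induction for the explicit formula in (i), Taylor expansion of $(n-1-\nu)^\gamma$ summed against the geometric weights $a^\nu$ with a tail cut-off for the asymptotics, and the standard characteristic-polynomial method for (iii)--(iv)---is exactly the elementary verification one would expect for these textbook facts.
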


\subsection{Some notation}\label{notation}
We use the standard $\delta_a(x)$ to  denote the distribution function with a jump of height one at $a$.
Constants $c$ and $C$ are always numerical constants that may change between appearances.

\section{General \erw s}\label{allgemein}
\setcounter{equation}{0}
Let $\{\widetilde{S}_n,\,n\geq1\}$ be an ERW, and suppose that $R$ is a random variable with distribution function $F_R$ that is independent of the walk. If $\widetilde{S}_n/a_n\asto Z$ as $\nifi$ for some normalizing positive sequence $a_n\to\infty$ as $\nifi$, and some random variable $Z$, it follows from Proposition \ref{knep} that $R\widetilde{S}_n/a_n\asto RZ$ as $\nifi$. An immediate consequence of this fact is that we can extend Theorems 3.1, 3.4 and (the first half of) Theorem 3.7 of \cite{bercu} to cover more general step sizes. Namely, consider the ERW for which $\widetilde{X}_1\equiv 1$, and let the random variables $\widetilde{X}_n$, $n\ge 2$, be constructeded as in Section 2 with this special $\widetilde{X}_1$ as starting point.   Furthermore, let $R$ be a random variable, independent of $\{ \widetilde{X}_n,\,n\geq1\}$,  and consider
$X_n=R\cdot \widetilde{X}_n$, $n\geq1$, and, hence, $S_n= R\cdot \widetilde{S}_n$. 

The following theorem (which reduces to the cited results of \cite{bercu} if $R$ is a coin-tossing random variable),  holds for $S_n=R\tilde{S}_n$:
\begin{theorem}\quad
\emph{(a)}\quad For $0<p<3/4$, \quad $\dfrac{S_n}{n}\asto 0\ttt{as}\nifi$;\\[1mm]
\emph{(b)}\quad For $p=3/4$, \quad $\dfrac{S_n}{\sqrt{n}\log n}\asto 0\ttt{as}\nifi$;\\[1mm]
\emph{(c)}\quad For $3/4<p<1$, \quad $\dfrac{S_n}{n^{2p-1}}\asto RL\ttt{as}\nifi$,\\[1mm]
where $L$ is a non-dgenerate random variable.
\end{theorem}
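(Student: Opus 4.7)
The plan is to reduce each of the three assertions to the corresponding almost sure convergence for the auxiliary walk $\{\widetilde{S}_n\}$ (where $\widetilde X_1\equiv 1$) and then multiply pathwise by the independent factor $R$. Since with $\widetilde X_1\equiv 1$ the walk $\{\widetilde{S}_n\}$ is precisely the classical ERW treated in \cite{bercu}, Theorems 3.1, 3.4 and 3.7 of \cite{bercu} directly deliver
\[
\widetilde{S}_n/n\asto 0\;(0<p<3/4),\quad \widetilde{S}_n/(\sqrt n\log n)\asto 0\;(p=3/4),\quad \widetilde{S}_n/n^{2p-1}\asto L\;(3/4<p<1),
\]
where $L$ is non-degenerate and, by construction, measurable with respect to $\sigma(\widetilde X_n:n\ge 1)$.

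Each of the three claims then follows by a pointwise argument. For parts (a) and (b), fix an $\omega$ in the full-measure set on which $\widetilde{S}_n(\omega)/a_n\to 0$ (with $a_n=n$ or $a_n=\sqrt n\log n$); since $R(\omega)$ is finite a.s., we have $S_n(\omega)/a_n=R(\omega)\widetilde{S}_n(\omega)/a_n\to R(\omega)\cdot 0=0$, so $S_n/a_n\asto 0$. For part (c) the same argument on the full-measure set $\{\widetilde{S}_n/n^{2p-1}\to L\}$ gives $S_n/n^{2p-1}=R\cdot\widetilde{S}_n/n^{2p-1}\to R\cdot L$ pathwise, whence $S_n/n^{2p-1}\asto RL$. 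The limit $L$, being $\sigma(\widetilde X_n:n\ge 1)$-measurable, is automatically independent of $R$ by the standing independence assumption.

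I do not expect a real obstacle beyond bookkeeping. The two ingredients doing all the work are (i) the fact that the recipe generating $\widetilde X_{n+1}$ from $\widetilde X_1,\dots,\widetilde X_n$ only ever records signs of past steps, so factoring out a constant (or the independent random variable $R$) from every step commutes with the construction and yields exactly the general-step walk $S_n=R\widetilde{S}_n$; and (ii) the trivial pointwise behaviour of almost sure convergence under multiplication by an a.s.\ finite random variable. Proposition \ref{knep} would only need to be invoked if one wished to obtain a distributional statement $S_n/a_n\dto RZ$ from a marginal distributional convergence $\widetilde{S}_n/a_n\dto Z$; for the almost sure statements in the theorem this refinement is not required.
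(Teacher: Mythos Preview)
Your proposal is correct and follows essentially the same route as the paper: invoke Bercu's Theorems 3.1, 3.4 and 3.7 for the auxiliary walk $\widetilde{S}_n$ with $\widetilde X_1\equiv 1$, then use the identity $S_n=R\,\widetilde{S}_n$ to transfer the almost sure limits. Your observation that Proposition~\ref{knep} (a distributional statement) is not actually needed here, and that the pointwise product with an a.s.\ finite $R$ already does the job, is in fact a small clarification over the paper's text, which somewhat loosely cites Proposition~\ref{knep} for the almost sure conclusion.
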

As for convergence in distribution,  we have to distinguish more carefully between the three cases.  
\begin{theorem}\label{thmgerw}
For $0<p<3/4$ we obtain
\[ \frac{S_n}{\sqrt{n}} \dto \int_{\mathbb{R}\backslash\{0\}}  {\cal N }_{0,\frac{1}{3-4p}} (\cdot/|t|) \,dF_R(t) +P(R=0)\cdot\delta_{[0,\infty)}(\cdot)\ttt{as}\nifi.\]
 Moreover, if $E(R^2)<\infty$, then $E(S_n/\sqrt{n})\to 0 $ and $E((S_n/\sqrt{n})^2)\to E(R^2)/(3-4p)$ as $\nifi$.
\end{theorem}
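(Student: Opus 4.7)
The plan is to deduce this central limit theorem from the corresponding result for the standard ERW in \cite{bercu} via Proposition \ref{knep}, followed by a short conditioning argument to rewrite the limit law in the stated form.

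First, I would invoke Theorem 3.4 of \cite{bercu}, which gives, for $0<p<3/4$,
\[\frac{\widetilde S_n}{\sqrt n}\dto Z\ttt{as}\nifi,\quad \text{where}\quad Z\sim\mathcal{N}\big(0,\tfrac{1}{3-4p}\big).\]
Since $R$ is independent of the entire sequence $\{\widetilde X_n,\,n\ge 1\}$, Proposition \ref{knep} with $U_n=\widetilde S_n/\sqrt n$ and $V=R$ immediately gives $S_n/\sqrt n=R\cdot(\widetilde S_n/\sqrt n)\dto RZ$ as $\nifi$.

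Next, to convert the law of $RZ$ into the mixture written in the theorem, I would condition on $R$:
\[P(RZ\le x)=\int_{\mathbb{R}} P(tZ\le x)\,dF_R(t).\]
On the atom $\{R=0\}$ the integrand equals $\mathbf 1_{[0,\infty)}(x)=\delta_{[0,\infty)}(x)$. For $t\neq 0$, symmetry of $Z$ makes $P(tZ\le x)$ equal to the distribution function of $\mathcal{N}(0,t^2/(3-4p))$ evaluated at $x$, which is exactly $\mathcal{N}_{0,1/(3-4p)}(x/|t|)$; splitting the integral into the atom at $0$ and its complement in $\mathbb{R}$ yields the stated formula.

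For the moment statements, independence of $R$ and $\widetilde S_n$ gives $E(S_n/\sqrt n)=E(R)\,E(\widetilde S_n)/\sqrt n$ and $E(S_n^2/n)=E(R^2)\,E(\widetilde S_n^2)/n$, where $E(R^2)<\infty$ implies $E|R|<\infty$ by Cauchy--Schwarz. The required scalar limits $E(\widetilde S_n)/\sqrt n\to 0$ and $E(\widetilde S_n^2)/n\to 1/(3-4p)$ can be read off from \cite{bercu}; the first follows because iterating (\ref{21}) gives $E(\widetilde S_n)=\prod_{k=1}^{n-1}(1+(2p-1)/k)=O(n^{2p-1})$, which is $o(\sqrt n)$ when $p<3/4$, and the second is the variance asymptotics underlying Bercu's CLT.

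I do not foresee any serious obstacle: the argument is essentially a transfer of Bercu's CLT through the independent random scaling $R$. The only care needed is bookkeeping in identifying the limit law, specifically handling the possible atom of $F_R$ at $0$ and checking that the single expression $\mathcal{N}_{0,1/(3-4p)}(\cdot/|t|)$ covers both signs of $t\neq 0$, which is where the symmetry of $Z$ enters.
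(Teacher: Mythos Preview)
Your proposal is correct and follows essentially the same route as the paper: both arguments condition on $R$, invoke Bercu's CLT for $\widetilde S_n/\sqrt n$, and identify the resulting mixture law, with the moment statements handled by independence. The only cosmetic difference is that you package the passage to the limit via Proposition~\ref{knep} and then identify the law of $RZ$, whereas the paper writes out the conditioning on $R$ directly and appeals to dominated convergence to take the limit inside the integral over $dF_R$.
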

\begin{proof}
As $R$ and $S_n$ are independent we find that
\begin{eqnarray*} 
P\Big(\frac{S_n}{\sqrt{n}}\le x\Big)&=& \int_{\mathbb{R}} P\Big(R\frac{\widetilde{S}_n}{\sqrt{n}} \le x \mid R\Big) dF_R(t)\\
&=&  \int_{\mathbb{R}} P\Big(t\frac{\widetilde{S}_n}{\sqrt{n}} \le x \Big) dF_R(t)\\
&=& \int_{(-\infty,0)} P\Big(\frac{\widetilde{S}_n}{\sqrt{n}} \ge x/t \Big)  dF_R(t)+ \int_{(0,\infty)} P\Big(\frac{\widetilde{S}_n}{\sqrt{n}} \le x/t \Big) dF_R(t)\\&&\hskip3pc+ P(R=0) \cdot\delta_{[0,\infty)}(x)\\
&\to&  \int_{(-\infty,0)} \big(1-{\cal N }_{0,\frac{1}{3-4p}} ( x/t )\big) dF_R(t)+ \int_{(0,\infty)}  {\cal N }_{0,\frac{1}{3-4p}}( x/t ) \, dF_R(t)\\&&\hskip3pc+ P(R=0) \cdot\delta_{[0,\infty)}(x),
\end{eqnarray*}
by dominated convergence which yields the desired result.

The second part is immediate, since $R$ is independent of everything else.
\vsb
\end{proof}
\begin{remark}\label{remgerw}\emph{ If $R=\pm 1$ with probabilities $r$ and $(1-r)$, respectively, the limit distributions of $S_n/\sqrt{n}$ and $\widetilde{S}_n/\sqrt{n}$ are the same, and we rediscover Theorem 3.3 of \cite{bercu}.
}
\end{remark}
\begin{remark}\label{rem42}\emph{For the critical case, $p=3/4$ one similarly obtains, using \cite{bercu}, Theorem 3.6, that
\[ \frac{S_n}{\sqrt{n\log n}} \dto \int_{\mathbb{R}\backslash\{0\}}  {\cal N }_{0,1} (\cdot/|t|) \,dF_R(t) +P(R=0)\cdot\delta_{[0,\infty)}(\cdot)\ttt{as}\nifi.\]
The supercritical case, $3/4<p<1$, has a different evolution and no analogous result exists.}\vsb\end{remark}

\section{Remembering only the distant past 1}\label{anfang1}
\setcounter{equation}{0}
This turns out as being the easiest case, since convenient independence is inherent. We begin by assuming that the elephant only remembers the first step, i.e., that $\cfn=\sigma\{X_1\}$, and begin with the assumption that $X_1=1$ (recall that partial sums are denoted with the letter $T$). Then, 
\[\exnpf=E(\xnp\mid X_1)=(2p-1)\cdot 1=E(\xnp)\ttt{for all} n\geq 1,\] and, hence, 
\[\etnpf=1+ n(2p-1) = E(T_{n+1}).\]
Moreover, applying (\ref{uli5}) to $T_n$ we find that
\beaa
E(\tnpz)&=& E(T_n^2)+2\,(2p-1)E(T_n\cdot1)+1\\
&=&E(\tnz)+2(2p-1)\big(1+(n-1)(2p-1)\big)+1\\
&=&E(\tnz)+2(2p-1)^2n +4(2p-1)(1-p)+1,
\eeaa
which, after telescoping, yields
\[E(\tnpz)=1+(2p-1)^2n(n+1)+\big(4(2p-1)(1-p)+1\big)n,\]
and, finally, 
\[\var (\tnp) =\big(1-(2p-1)^2)\big)n=4p(1-p)n.\]
A completely analogous calculation for characteristic functions, with an eye on (\ref{uli2}), shows that
\beaa
\varphi_{\tnp}(t)&=&E\Big(E\Big(e^{it(T_n+\xnp)}\mid\cgn\Big)\Big)=E\Big(e^{itT_n}\cdot E\Big(e^{it\xnp}\mid \cgn\Big)\Big)\\
&=&E\Big(e^{itT_n}\cdot E\Big(e^{it\xnp}\mid X_1\Big)\Big)=E\Big(e^{itT_n}\cdot (pe^{it}+(1-p)e^{-it}\Big)\\
&=&\varphi_{T_n}(t)\cdot  \big(pe^{it}+(1-p)e^{-it}\big),
\eeaa
after which telescoping tells us that
\[\varphi_{\tnp}(t)=\big( pe^{it}+(1-p)e^{-it}\big)^n\cdot e^{it},\]
after which a standard computation shows that
\[\varphi_{(T_n-n(2p-1))/\sqrt{n}}(t)\to \exp\{-4p(1-p)t^2\}\ttt{as}\nifi.\]
Next we note that the computations so far prove that the increments are \emph{uncorrelated}, suggesting independence ... In fact, recalling that we have assumed that $X_1=1$, we have, setting $\alpha_k=1$ if $X_k=+X_1$, for  $k\geq2$, and 0 otherwise,
\beaa
P(X_i=X_j=1)&=&P(\alpha_i=\alpha_j=1-p)= P(\alpha_i=1-p)\cdot P(\alpha_j=1-p)\\
& =&(1-p)^2=P(X_i=1)\cdot P(X_j=1),
\eeaa
for $i,j\geq2$ and different.

This means that the ERW coincides with the classical simple random walk, except for the fact that the first step is always equal to one. This is---after some thinking---rather obvious, because (in the language of \cite{volkov}) we might interpret $X_1$ as a coin that we either flip or not before each new step. Hence we obtain:
\begin{proposition}\label{prop51}  The strong law of large numbers, the central limit theorem, and the law of the iterated logarithm all hold for  $\{T_n,\,n\geq1\}$.
\end{proposition}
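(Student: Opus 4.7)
The plan is to rigorously cash in the observation — already flagged in the paragraph immediately preceding the proposition — that, conditional on $X_1=1$, the process $\{T_n\}$ coincides with a classical simple random walk shifted by the deterministic first step. Concretely, the $\alpha_k$ for $k\geq 2$ defined just above are, by the very construction of the ERW, generated at disjoint times by independent Bernoulli$(p)$ sign-choices; thus the pairwise independence check displayed in the text automatically extends to full mutual independence. This upgrade is all that is needed.

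Once this is in hand, write
\[T_n = 1 + \sum_{k=2}^{n} X_k, \ttt{with} X_k = 2\alpha_k-1,\; k\geq 2,\]
so that $X_2,X_3,\ldots$ are i.i.d.\ with $P(X_k=1)=p=1-P(X_k=-1)$, common mean $\mu=2p-1$ and common variance $\sigma^2=4p(1-p)$ — reassuringly matching the moments of $T_n$ computed just above, which is a useful sanity check. Since $|X_k|\leq 1$, the summands possess moments of all orders, so all three classical i.i.d.\ limit theorems apply immediately to the bounded sequence $\{X_k\}_{k\geq 2}$: Kolmogorov's strong law yields $T_n/n\asto 2p-1$; the Lindeberg--L\'evy CLT yields $(T_n - n(2p-1))/\sqrt n \dto \mathcal N(0,4p(1-p))$; and the Hartman--Wintner law of the iterated logarithm yields
\[\limsup_{\nifi}\frac{T_n - n(2p-1)}{\sqrt{2n\log\log n}} = 2\sqrt{p(1-p)}\ttt{a.s.,}\]
with the corresponding $\liminf$ being its negative.

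There is no genuine obstacle here. The only point that deserves explicit mention is the passage from pairwise to full independence of the $X_k$'s ($k\geq 2$), but this is a matter of recording what is built into the model rather than a real computation. After that, the proposition is a direct corollary of textbook theorems for sums of bounded i.i.d.\ random variables.
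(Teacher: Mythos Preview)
Your proposal is correct and follows essentially the same route as the paper: both arguments hinge on the observation (made just before the proposition) that, given $X_1=1$, the steps $X_2,X_3,\dots$ are genuinely i.i.d.\ with $P(X_k=1)=p$, so that $\{T_n\}$ is a classical simple random walk with deterministic first step, and the three limit theorems are then immediate consequences of the standard i.i.d.\ results. The only cosmetic difference is that the paper also records a direct characteristic-function computation for the CLT before invoking independence, whereas you go straight to the i.i.d.\ reduction; your slightly more explicit bookkeeping (naming Kolmogorov, Lindeberg--L\'evy, Hartman--Wintner) is a welcome clarification but not a different method.
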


 If, on the other hand, the first step is equal to $-1$, then, by symmetry,
$E(\tnp)=-n(2p-1)-1$, the variance remains the same (recall the discussion toward the end of Section \ref{defs}), and, again, by symmetry, $T_{n+1}+ n(2p-1)$ normalized by $\sqrt{n}$ is asymptotically normal and the SLLN and the LIL do hold again.

As a consequence, assuming that $X_1$ is a coin-tossing random variable, we are (asymptotically) confronted with two normal distributions, one for each of the two portions of the probability space. In fact, if we imagine the situation that 
$r=P(X_1=+1)$ is close to zero or one it  is rather apparent how the very first step determines along which branch it will evolve.  

One also notes, more formally, that  $E(\xnp\mid\cfn)=(2p-1)E(X_1)$, so that $E(\xnp)=(2p-1)^2$, implying that 
$\var( S_n)={\cal O}(n^2)$ (and not of order $n$) as $\nifi$. Thus, an ordinary CLT is not valid, with the exception that if $p=1/2$ the two ''branches'' determined by the first step collaps (asymptotically) into one, and we are ultimately faced with a classical simple symmetric random walk.

Hence, the following limit result is always available in the general case:
\begin{theorem}\label{thm52} Let $S_n=\sumk X_k$.Then, \\[2mm]
\emph{(a)} 
\hspace*{1cm}$\displaystyle\frac{S_n}{n}\dto\begin{cases}\phantom{-(}2p-1,&\ttt{with probability}p,\\[2mm]
 -(2p-1),&\ttt{with probability}1-p,\end{cases}\ttt{as}\nifi$;\\[2mm]
\emph{(b)} \hspace*{1cm} $E(S_n/n)\to (2p-1)^2\ttt{and} \var (S_n/n)\to  4p(1-p)(2p-1)^2\ttt{as} \nifi$.
\end{theorem}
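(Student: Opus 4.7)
The plan is to condition on the value of $X_1$ and combine Proposition \ref{prop51} with the symmetry/mirroring argument from Section \ref{defs}. Because $r=p$, we have $X_1=+1$ with probability $p$ and $X_1=-1$ with probability $1-p$; on each of these two events the subsequent evolution is governed by the same independent sequence of Bernoulli coin-flips that decide whether to copy or flip $X_1$ at each step, so $X_1$ is independent of those choices in a pointwise sense.

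On the event $\{X_1=+1\}$ the partial sums $S_n$ coincide with $T_n$, so Proposition \ref{prop51} (the strong law for the fixed-start walk) gives $S_n/n\asto 2p-1$ on that event. On $\{X_1=-1\}$, the mirroring described at the end of Section \ref{defs} identifies $S_n$ pointwise with $-T_n$ (same subsequent coin flips, opposite first step), so the strong law transfers and $S_n/n\asto -(2p-1)$ there. Almost sure convergence implies convergence in distribution, so the law of total probability yields the two-point mixture in part (a).

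For part (b) I would condition on $X_1$ and use that, conditionally, $S_n$ has the same distribution as $X_1\cdot T_n$. This gives
\begin{align*}
E(S_n\mid X_1) &= X_1\bigl(1+(n-1)(2p-1)\bigr),\\
E(S_n^2\mid X_1) &= E(T_n^2) \;=\; 4p(1-p)\,n + \bigl(1+(n-1)(2p-1)\bigr)^2,
\end{align*}
where the closed forms on the right were already computed earlier in Section \ref{anfang1}, and where the conditional second moment is independent of $X_1$ because $X_1^2\equiv1$. Taking expectations with $E(X_1)=2p-1$, dividing by $n$ and $n^2$ respectively, and letting $\nifi$ yields $E(S_n/n)\to (2p-1)^2$ and $E\bigl((S_n/n)^2\bigr)\to (2p-1)^2$. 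Subtracting the square of the mean then gives
\[\var(S_n/n)\to (2p-1)^2-(2p-1)^4 \;=\; 4p(1-p)(2p-1)^2,\]
as claimed.

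There is no real obstacle in this argument: Proposition \ref{prop51} does the analytic work and everything else is a routine conditioning computation. The one point worth being careful about is that the mirroring of Section \ref{defs} is a pointwise (not merely distributional) identification between the two branches, which is what allows the almost sure limit, and not just a distributional one, to be inherited on the event $\{X_1=-1\}$.
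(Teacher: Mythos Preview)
Your argument is correct and follows essentially the same route as the paper: condition on the value of $X_1$, use that on each event $S_n$ agrees with $\pm T_n$, and combine. The only difference is that for part~(a) you invoke the strong law of Proposition~\ref{prop51} to get $T_n/n\asto 2p-1$, whereas the paper uses only the mean and variance computations (Chebyshev) to get $T_n/n\pto \pm(2p-1)$ on each branch. Your version is slightly stronger---it actually yields $S_n/n\asto(2p-1)X_1$, which is exactly the content of Remark~\ref{remark52} and anticipates Theorem~\ref{thm53}(b)---so nothing is lost and something is gained.

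Two minor remarks. First, the conditional variance is $\var(T_n)=4p(1-p)(n-1)$ rather than $4p(1-p)n$ (the paper's formula $\var(T_{n+1})=4p(1-p)n$ is indexed by $n+1$); this does not affect any limit. Second, your worry about pointwise versus distributional identification is well placed but easily resolved: in this model $X_k=\alpha_k X_1$ for $k\ge2$ with the $\alpha_k$ i.i.d.\ and independent of $X_1$, so $S_n=X_1\cdot T_n$ holds pointwise with $T_n=1+\sum_{k=2}^n\alpha_k$ independent of $X_1$. This is precisely the structure exploited in Section~\ref{allgemein} and in the proof of Theorem~\ref{thm53}.
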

\pf{(a)} If $X_1=\pm 1$ we know from above that $E(T_n)=\pm (1+(n-1)(2p-1))$, and that $\var(T_n)=4p(1-p)(n-1)$. This tells us that,  $\frac{T_n}{n}\pto \pm (2p-1)$ as $\nifi$. The conclusion follows.\\[1mm]
\pf{(b)}  Immediate.\vsb
\begin{remark}\label{remark51}\emph{(i) An interpretation of the limit in (a) is that the random walk at hand, on average, behaves, asymptotically, like a coin-tossing random variable with values at the points $\pm (2p-1)$.}\\[1mm]
\emph{(ii) An alternative way of phrasing the conclusion of the theorem is that}
\[F_{S_n/n}(x)\to p\cdot \delta_{-(2p-1)}(x)+(1-p)\cdot\delta_{2p-1}(x)\ttt{as}\nifi.\]
\end{remark}
However, if we use a random normalization we obtain the following result:
\begin{theorem}\label{thm53}  Let $S_n=\sumk X_k$.Then, \\[2mm]
\emph{(a)} \hspace*{1cm}$\displaystyle
\frac{S_n-n(2p-1)X_1}{\sqrt{4n p(1-p)}} \dto {\cal N}_{0,1}
 \ttt{as}\nifi$;\\[2mm]
\emph{(b)} \hspace*{1cm}$\displaystyle
\frac{S_n-n(2p-1)X_1}{n} \asto 0 \ttt{as}\nifi$;\\[2mm]
\emph{(c)} \hspace*{1cm}$\displaystyle
\limsup_{n \to \infty}\,(\liminf_{\nifi})\frac{S_n-n(2p-1)X_1}{\sqrt{8n p(1-p)\log \log n}} = 1\;(-1) \ttt{a.s.}$
\end{theorem}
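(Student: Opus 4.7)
The plan is to reduce all three statements to their classical i.i.d.\ analogues by conditioning on $X_1$, and then to glue the two branches $\{X_1=+1\}$ and $\{X_1=-1\}$ together via the mirror symmetry discussed at the end of Section~\ref{defs}.

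First I would work on the event $\{X_1=+1\}$, on which $S_n=T_n$. The key observation recorded just before Proposition~\ref{prop51} is that, conditional on $X_1=+1$, the increments $X_2,X_3,\ldots$ form an i.i.d.\ sequence with $P(X_k=1)=p$, mean $2p-1$, and variance $4p(1-p)$. Thus the classical CLT, SLLN and Hartman--Wintner LIL applied to $\sum_{k=2}^{n}\bigl(X_k-(2p-1)\bigr)$ yield, respectively, $\mathcal{N}_{0,1}$ behaviour after division by $\sqrt{4np(1-p)}$, almost sure convergence to zero after division by $n$, and $\limsup=1$, $\liminf=-1$ after division by $\sqrt{8np(1-p)\log\log n}$. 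Since
\[
 T_n - n(2p-1)X_1 \;=\; \sum_{k=2}^{n}\bigl(X_k-(2p-1)\bigr)\;+\;2(1-p),
\]
the bounded remainder $2(1-p)$ is negligible under each of these three normalisations, so (a), (b) and (c) all hold on $\{X_1=+1\}$.

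Second, on $\{X_1=-1\}$ the mirror symmetry implies that the conditional distribution of $(S_n)_{n\ge 1}$ given $X_1=-1$ coincides with the distribution of $(-T_n)_{n\ge 1}$, so that $S_n-n(2p-1)X_1$, conditioned on $X_1=-1$, has the same distribution as $-(T_n-n(2p-1))$. Therefore (a) holds on this event because $\mathcal{N}_{0,1}$ is symmetric, (b) because $Y_n\asto 0$ implies $-Y_n\asto 0$, and (c) because $\limsup(-Y_n)=-\liminf Y_n$ almost surely, and vice versa. The two conditional conclusions are then combined into unconditional ones exactly as in the proof of Theorem~\ref{thmgerw}: for (a) by applying the law of total probability to the distribution function together with bounded convergence, and for (b) and (c) by noting that a union of two null sets is a null set. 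The only (mild) obstacle is the bookkeeping around $E(T_n)=n(2p-1)+2(1-p)$, namely checking that the residual constant $2(1-p)$ is swallowed by each of the three normalisations $n$, $\sqrt{4np(1-p)}$ and $\sqrt{8np(1-p)\log\log n}$; this is immediate in all three cases.
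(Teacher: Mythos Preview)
Your proposal is correct and follows essentially the same route as the paper: for (b) and (c) you partition $\Omega$ into $\{X_1=+1\}$ and $\{X_1=-1\}$ and apply the classical SLLN and LIL on each branch, exactly as the paper does, while for (a) you use the conditioning-on-$X_1$ argument that the paper explicitly mentions as the alternative to its factorisation $S_n-n(2p-1)X_1=X_1\bigl(T_n-n(2p-1)\bigr)$ combined with Theorem~\ref{thmgerw}. The only cosmetic difference is that you spell out the bounded remainder $2(1-p)$ explicitly, which the paper absorbs into Proposition~\ref{prop51}.
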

\pf{(a)} We use the fact that
\[\frac{S_n-n(2p-1)X_1}{\sqrt{4n\,p\,(1-p)}}=X_1\, \frac{T_n-n(2p-1)}{\sqrt{4np(1-p)}},\]
together with Theorem \ref{thmgerw} and its Remark \ref{remgerw}.

Alternatively, one may condition on the value of $X_1$. This procedure will be exploited in the proof of Theorem \ref{thm62} in the next section. \\[1mm]
\pf{(b) and (c)} Define $\Omega_1=\{\omega \in \Omega : X_1(\omega)=1\}$ and $\Omega_2=\Omega_1^c$. After renormalization the original probability measure will be a probability measure on $\Omega_1$. Based on this measure on $\Omega_1$ we obtain an SLLN and an LIL for $S_n-n(2p-1)X_1$. Similarly on $\Omega_2$. Combining them yields the desired result.\vsb
 \begin{remark}\label{remark52}\emph{The strong law can also be formulated with a random RHS:
\[ \frac{S_n}{n}\asto (2p-1)X_1 \ttt{as} n \to \infty\,.\]}\end{remark}
\begin{remark}\label{remark53}\emph{If $X_1$ is a general random variable with distribution $F$ having no mass at zero, then}
\[ \frac{S_n-n(2p-1)\,X_1}{\sqrt{4np(1-p)}} \dto \int_{-\infty}^\infty {\cal N}_{0,1}(\cdot/|t|)\, dF(t)\ttt{as}\nifi.\] 
\vsp\end{remark}\vskip2mm
A special case is, once again, $p=1/2$:
\begin{corollary}\label{cor51}If $p=1/2$, then
\[\frac{S_n}{n}\asto 0 \mbox{ as }\nifi,\quad\frac{S_n}{\sqrt{n}}\dto  {\cal N}_{0,1}\mbox{ as }\nifi,\quad
\limsup_{n\to \infty}\,(\liminf_{\nifi})\frac{S_n}{\sqrt{2n\log\log n}}= 1;(-1) \ttt{a.s.}
\]
\end{corollary}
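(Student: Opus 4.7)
The plan is simply to substitute $p=1/2$ into Theorem \ref{thm53} and observe the simplifications. With $p=1/2$ we have $2p-1 = 0$, which annihilates the random centering term $n(2p-1)X_1$ in every part of Theorem \ref{thm53}; we also have $4p(1-p)=1$, so the normalizing constants simplify to $\sqrt{n}$ in part (a) and to $\sqrt{2n\log\log n}$ in part (c).

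Concretely, I would proceed in three short steps. First, Theorem \ref{thm53}(a) at $p=1/2$ reads $S_n/\sqrt{n}\dto\mathcal{N}_{0,1}$, giving the central limit theorem. Second, Theorem \ref{thm53}(b) at $p=1/2$ reads $S_n/n\asto 0$, giving the strong law. Third, Theorem \ref{thm53}(c) at $p=1/2$ yields
\[
\limsup_{\nifi}\,(\liminf_{\nifi})\,\frac{S_n}{\sqrt{2n\log\log n}}=1\;(-1)\ttt{a.s.,}
\]
which is the law of the iterated logarithm. Collecting these three statements produces the corollary.

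There is no real obstacle here; the content is that the dependence on $X_1$ present in the general case evaporates at $p=1/2$, as already foreshadowed in the discussion just before Theorem \ref{thm52}, where it is noted that the two sign-branches determined by $X_1$ collapse into one. In particular, one sees \emph{a posteriori} that the walk at $p=1/2$ is indistinguishable asymptotically from an ordinary simple symmetric random walk, which is why the three classical limit theorems all hold with the standard normalizations.
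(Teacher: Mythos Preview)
Your proof is correct and is exactly the intended approach: the paper states Corollary~\ref{cor51} without proof because it is the immediate specialization of Theorem~\ref{thm53} to $p=1/2$, where $2p-1=0$ and $4p(1-p)=1$.
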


\section{Remembering only the distant past 2}\label{anfang2}
\setcounter{equation}{0}
In this section we begin by assuming that the elephant only remembers the first two steps, so that 
$\cfn=\sigma\{X_1, X_2\}$, and suppose that  $X_1=X_2=1$. Then, for $n\geq2$,
\bea\label{61}\exnpf=E(\xnp\mid X_1, X_2)=(2p-1)\cdot\frac{1+1}{2}=(2p-1)=E(\xnp)\eea 
for all $n\geq2$, and, hence,
\[\etnpf=2+ (n-1)(2p-1) =n(2p-1)+3-2p= E(T_{n+1}),\]
 (since $E(T_2)=2$).  Extending the idea from the previous section  that the walk evolves as an ordinary simple random walk beginning at the third step, a natural guess is that
\[\var (T_{n+1})=4p(1-p)(n-1),\quad n\geq3.\]
To see this we first observe that $\var( T_3)=\var( X_3)= 4p(1-p)$, that is, the formula is correct for $n=2$.
Assuming it is correct for $n-1$ we have 
\beaa
\var (T_{n+1})&=&\var(T_n+\xnp)=\var (T_n)+2\cov(T_n,\xnp)+\var( \xnp) \\
&=& 4p(1-p)(n-2)+ 0+ 4p(1-p)=4p(1-p)(n-1),\eeaa
since by (\ref{uli4}) and the fact that $X_1=X_2=1$,
\begin{equation}\label{cov}
\cov (T_n,\xnp)=E(T_n\xnp)-E(T_n)E(\xnp)=E(T_n(2p-1))-E(T_n)(2p-1)=0   .\end{equation}
Next, by modifying the computations involving the characteristic function from Section \ref{anfang1},
we obtain
\beaa
\varphi_{\tnp}(t)&=&E\Big(E\big(e^{it(T_n+\xnp)}\mid\cgn\big)\Big)=E\Big(e^{itT_n}\cdot E\big(e^{it\xnp}\mid X_1,X_2\big)\Big)\\
&=&E\Big(e^{itT_n}\cdot \big(pe^{it}+(1-p)e^{-it}\big)\Big)=\varphi_{T_n}(t)\cdot  \big(pe^{it}+(1-p)e^{-it}\big).
\eeaa
By continuing as before one obtains, after proper centering, a limiting normal distribution for these initial $X$-values. Similarly for the other ones. But, only for each ''branch'' separately. One can also ascertain that the variance is not linear if we assume random beginnings. Except, as before, when $p=1/2$ and the three main limit theorems (SLLN, CLT, LIL) hold (as in Corollary \ref{cor51}).
\smallskip

The following analog of Theorem \ref{thm52} holds in the general case (as one might expect):
\begin{theorem}\label{thm61} Let $S_n=\sumk X_k$. Then \\[2mm]
\emph{(a)} 
\hspace*{1cm}$\displaystyle\frac{S_n}{n}\dto\begin{cases}\phantom{-(}2p-1,&\ttt{with probability}p^2,\\
\phantom{5555}0,&\ttt{with probability}1-p,\\
 -(2p-1),&\ttt{with probability}p(1-p),\end{cases}\ttt{as}\nifi$;\\[2.5mm]
\emph{(b)} \hspace*{1cm} $E(S_n/n)\to p(2p-1)^2\ttt{and that} \var (S_n/n)\to  p(1-p)(2p-1)^2\big(4p^2+1\big).$
\end{theorem}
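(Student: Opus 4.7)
\medskip
\noindent\textbf{Proof plan.}
The plan is to condition on the pair $(X_1,X_2)$. By (\ref{uli}), for every $n\ge 2$,
\[E(X_{n+1}\mid \cfn) = (2p-1)\cdot\frac{X_1+X_2}{2},\]
which is a constant depending only on $(X_1,X_2)$; consequently, conditional on $(X_1,X_2)$ the increments $X_3,X_4,\ldots$ are i.i.d.\ with values in $\{-1,+1\}$ and common mean $(2p-1)(X_1+X_2)/2$. The four possible values of $(X_1,X_2)$ carry probabilities $p^2$, $p(1-p)$, $(1-p)^2$, $p(1-p)$ for $(1,1)$, $(1,-1)$, $(-1,1)$, $(-1,-1)$ respectively, obtained by combining $P(X_1=1)=p$ with the step-$2$ rule $P(X_2=+X_1\mid X_1)=p$ and $P(X_2=-X_1\mid X_1)=1-p$.

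Three distinct asymptotic behaviours then arise. If $X_1=X_2=1$, the computation at the start of Section~\ref{anfang2} gives $E(T_n)=n(2p-1)+3-2p$ and $\var(T_n)=4p(1-p)(n-1)$, so $T_n/n\to 2p-1$ in $L^2$ (by Chebyshev) and a.s.\ (by Kolmogorov's SLLN applied to the i.i.d.\ tail $X_3,X_4,\ldots$). The case $X_1=X_2=-1$ is handled by the mirror-image argument recalled in Section~\ref{defs} and yields $T_n/n\to -(2p-1)$. Finally, if $X_1+X_2=0$ then the conditional mean vanishes, the $X_n$ for $n\ge 3$ are symmetric $\pm 1$ variables, and $T_n/n\to 0$. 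The two subcases $(1,-1)$ and $(-1,1)$ together have probability $p(1-p)+(1-p)^2=1-p$, so the law of total probability assembles these three limits into the three-point law of part~(a).

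For part~(b), boundedness $|S_n/n|\le 1$ yields uniform integrability of all powers, so the distributional convergence in (a) upgrades at once to convergence of the mean and the second moment. Writing $Y$ for the limit random variable, direct computation gives
\[E(Y) = p^2(2p-1)+p(1-p)\cdot(-(2p-1)) = p(2p-1)^2\]
and $E(Y^2)=(2p-1)^2\bigl(p^2+p(1-p)\bigr)=p(2p-1)^2$, so $\var(Y)=p(2p-1)^2\bigl(1-p(2p-1)^2\bigr)$. The identity $1-p(2p-1)^2=(1-p)(4p^2+1)$, obtained by expanding both sides, puts the variance in the stated form. The main obstacle is essentially bookkeeping: correctly pairing the four outcomes of $(X_1,X_2)$ with the three limit values (the two opposite-sign subcases have unequal probabilities because $P(X_1=1)\ne P(X_1=-1)$ in general) and verifying the final algebraic simplification.
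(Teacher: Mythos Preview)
Your proof is correct and follows essentially the same route as the paper: condition on the value of $(X_1,X_2)$, identify the three branches with their probabilities $p^2$, $1-p$, $p(1-p)$, and use the computed conditional mean and variance of $T_n$ to get convergence of $S_n/n$ on each branch. Your treatment is in fact slightly more explicit than the paper's---you spell out the conditional i.i.d.\ structure of $X_3,X_4,\ldots$ and invoke uniform integrability and the algebraic identity $1-p(2p-1)^2=(1-p)(4p^2+1)$ for part~(b), whereas the paper simply writes ``Part~(b) follows''---but the underlying argument is the same.
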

{\pf{(a)} 
If $X_1=X_2=\pm 1$ we know from above that $E(T_n)=\pm (n(2p-1)+3-2p)$, and that $\var( T_n)=4p(1-p)(n-2)$. Moreover, $E(T_n)=0$ whenever $X_1$ and $X_2$ have different signs. The variance remains the same (with $p=1/2$).  This, together with the fact that $P(X_1=X_2=1)=p^2$, $P(X_1=X_2=-1)=(1-p)p$, and $P(X_1\neq X_2)=p(1-p)+(1-p)^2=1-p$ helps us to finish the proof of the first part. Part (b) follows.\vsb
\begin{remark}\emph{(i) In analogy with Remark \ref{remark51} we have the interpretation  that the elephant, asymptotically, on average, performs a random walk on the points $\pm (2p-1)$ and $0$.}\\[2mm]
\emph{(ii) Mimicing Remark \ref{remark51} we may rewrite the conclusion of the theorem is
\[F_{S_n/n}(x)\to p(1-p)\cdot \delta_{-(2p-1)}(x)+(1-p)\cdot \delta_{0}(x) + p^2\cdot\delta_{2p-1}(x)\ttt{as}\nifi.\]
\vsp}\end{remark}
Once again random normalization produces further limit results:
\begin{theorem}\label{thm62}  Let $S_n=\sumk X_k$.Then, \\[2mm]
\emph{(a)} \hspace*{1cm}$\displaystyle
\frac{S_n-n(2p-1)\,(X_1+X_2)/2}{\sqrt{n }} \dto p\cdot{\cal N}_{0,4p(1-p)}+(1-p)\cdot {\cal N}_{0,1}
 \ttt{as}\nifi$;\\[2mm]
\emph{(b)} \hspace*{1cm}$\displaystyle
\frac{S_n-n(2p-1)\,(X_1+X_2)/2}{n} \asto 0 \ttt{as}\nifi$;\\[2mm]
\emph{(c)} \hspace*{1cm}$\displaystyle
\limsup_{n \to \infty}(\liminf_{n \to \infty})\frac{S_n-n(2p-1)\,(X_1+X_2)/2}{\sqrt{2n\log \log n}} = \sigma(X_1,X_2)\; (- \sigma(X_1,X_2))
 \ttt{a.s.,}$\\[1mm]
where $\sigma(X_1,X_2)= \begin{cases} 4p(1-p), &\ttt{for}  \omega \in \{\omega\in \Omega : X_1(\omega)\cdot X_2(\omega)=1\},\\ 1, &\ttt{otherwise.}\end{cases}$
\end{theorem}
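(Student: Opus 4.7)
The plan is to partition $\Omega$ according to the value of $(X_1,X_2)$ into the three disjoint events
\[A_+=\{X_1=X_2=1\},\quad A_-=\{X_1=X_2=-1\},\quad A_0=\{X_1+X_2=0\},\]
of probabilities $p^2$, $p(1-p)$, and $1-p$, respectively, and to run each of the claims (a)--(c) conditionally on each. The key observation is that, conditional on $\sigma\{X_1,X_2\}$, formula (\ref{61}) (together with its analog when $X_1+X_2=0$) shows that the subsequent steps $\{X_i\}_{i\geq 3}$ form an \iid sequence: on $A_\pm$ each $X_i$ equals $\pm 1$ with probability $p$ and $\mp 1$ with probability $1-p$, independently; on $A_0$ we have $P(X_i=\pm 1\mid A_0)=1/2$, again independently. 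The independence follows by the same factorization of the conditional characteristic function already displayed in Section \ref{anfang1}, now conditioned on $\sigma\{X_1,X_2\}$ instead of on $\sigma\{X_1\}$. This reduces (a), (b), (c) to standard \iid limit theorems applied branch by branch.

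For (a), on $A_+$ the quantity $T_n-n(2p-1)$ equals $\sum_{i=3}^{n}\bigl(X_i-(2p-1)\bigr)$ up to the bounded additive constant $2-2(2p-1)=4(1-p)$, so the ordinary CLT yields the conditional limit $\mathcal{N}_{0,4p(1-p)}$; by symmetry the same limit applies on $A_-$ for $T_n+n(2p-1)$. On $A_0$ the tail $X_3,X_4,\ldots$ is a symmetric simple random walk, hence $T_n/\sqrt{n}\dto \mathcal{N}_{0,1}$. The random centering $n(2p-1)(X_1+X_2)/2$ equals $\pm n(2p-1)$ on $A_\pm$ and vanishes on $A_0$, so the law of total probability, together with $P(A_+)+P(A_-)=p$ and $P(A_0)=1-p$, assembles the three branches into the stated mixture $p\cdot\mathcal{N}_{0,4p(1-p)}+(1-p)\cdot\mathcal{N}_{0,1}$.

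For (b) the Kolmogorov SLLN on the \iid tail of each branch gives $T_n/n\to \pm(2p-1)$ a.s.\ on $A_\pm$ and $T_n/n\to 0$ a.s.\ on $A_0$, which in every case matches $(2p-1)(X_1+X_2)/2$ on that branch; combining over the partition yields the claim. For (c) one applies the Hartman--Wintner LIL to the \iid tail on each branch: the asymptotic variance is $4p(1-p)$ on $A_+\cup A_-$ and $1$ on $A_0$, and dividing by $\sqrt{2n\log\log n}$ produces the stated $\sigma(X_1,X_2)$ as $\limsup$ and its negative as $\liminf$.

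The only real---and quite mild---obstacle is to verify cleanly that, conditionally on each of $A_\pm,A_0$, the sequence $\{X_i\}_{i\geq 3}$ is indeed \iid with the claimed distribution. This can be done either by iterating the characteristic-function factorization used in Section \ref{anfang1}, or directly from the dynamics: each $X_i$ for $i\geq 3$ is constructed from an independent uniform choice of an index in $\{1,2\}$ combined with an independent $(p,1-p)$ coin flip, so its conditional law given $\sigma\{X_1,X_2\}$ depends only on $(X_1,X_2)$ and the construction is genuinely independent across $i$. Once this independence is secured, the rest is book-work.
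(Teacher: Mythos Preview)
Your proof is correct and follows essentially the same route as the paper: partition $\Omega$ according to the outcome of $(X_1,X_2)$ into $A_+$, $A_-$, $A_0$, exploit that the tail $\{X_i\}_{i\ge 3}$ is conditionally \iid\ on each piece (the paper establishes this via the characteristic-function factorization in Section~\ref{anfang2}), and then invoke the classical CLT, SLLN, and LIL branch by branch before reassembling via the law of total probability. Your write-up is in fact more explicit than the paper's, which for (b) and (c) simply refers back to the analogous argument in Theorem~\ref{thm53}.
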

\pf{(a)}  Conditioning on the value of $(X_1+X_2)/2$ we obtain
\beaa
&&\hskip-2pc P\Big(\frac{S_n-n(2p-1)(X_1+X_2)/2}{\sqrt{n}} \le x\Big) 
= P\Big(\frac{S_n-n(2p-1)(X_1+X_2)/2}{\sqrt{n}} \le x \mid X_1=X_2=1\Big)\cdot p^2\\
&&\hskip3pc +P\Big(\frac{S_n-n(2p-1)(X_1+X_2)/2}{\sqrt{n}} \le x \mid  X_1=X_2=-1\Big)\cdot p(1-p)\\
&&\hskip3pc+P\Big(\frac{S_n-n(2p-1)(X_1+X_2)/2}{\sqrt{n}} \le x \mid  X_1+ X_2=0\Big)\cdot (1-p)\\
&&\hskip1pc= P\Big(\frac{T_n-n(2p-1)}{\sqrt{n}} \le x\Big)\cdot p^2
+P\Big(\frac{-T_n+n(2p-1)}{\sqrt{n}} \le x\Big)\cdot p(1-p)\\
&&\hskip3pc+P\Big(\frac{T_n}{\sqrt{n}} \le x \mid  X_1+ X_2=0\Big)\cdot (1-p)\\
&&\hskip1pc \to \big(p^2 + p(1-p)\big)\cdot{\cal N}_{0,4p(1-p)}(x)+(1-p) \cdot{\cal N}_{0,1}(x)\\[2mm]
&&\hskip1pc=p\cdot{\cal N}_{0,4p(1-p)}(x)+(1-p) \cdot{\cal N}_{0,1}(x)\ttt{as}\nifi.
\eeaa
\noindent
Parts (b) and (c) follow along the lines of the proof of Theorem \ref{thm52}.\vsb

\section{The distant past; higher order}\label{anfang3}
\setcounter{equation}{0}

If one remembers the first $m$ random variables for some  $m \in \mathbf{N}$, the following obvious extension of the  above results emerges.
\begin{theorem}\label{general ess} For  $q_k=P(S_{m}=m-2k)$, $r_k=\big((m-k)p+k(1-p)\big)/m$, and $ p_k=(m-2k)(2p-1)/m$, where $ 0\le k \le m$ and $m \in\mathbf{N}$, 
\[\frac{S_n}{n} \dto \sum_{k=0}^{m} q_k \delta_{p_k}\ttt{as}\nifi,\] 
and
\[\frac{S_n-n(2p-1)S_{m}/m}{\sqrt{n}}\dto \sum_{k=0}^{m}q_k \,{\cal N}_{0,4r_k(1-r_k)} \quad \nifi.\]
\end{theorem}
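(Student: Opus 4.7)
The plan is to reduce the theorem, by conditioning on the initial segment $X_1,\dots,X_m$, to a classical \iid random walk with a bounded initial perturbation.

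First I would observe that for $n\ge m$ the step $X_{n+1}$ is generated from $(X_1,\dots,X_m)$ by picking an index uniformly at random from $\{1,\dots,m\}$ and keeping or flipping the chosen sign with probabilities $p$ and $1-p$, using fresh, independent randomness each time. Hence, conditional on $(X_1,\dots,X_m)$, the variables $X_{m+1},X_{m+2},\dots$ are \iid, and their conditional distribution depends on the memory only through $S_m$. On the event $\{S_m=m-2k\}$ the memory contains $m-k$ plus-ones and $k$ minus-ones, so
\[P(X_{n+1}=1\mid X_1,\dots,X_m)=\frac{m-k}{m}\,p+\frac{k}{m}\,(1-p)=r_k,\]
and a short computation gives conditional mean $2r_k-1=(2p-1)(m-2k)/m=p_k$ and conditional variance $4r_k(1-r_k)$.

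For the first assertion I would apply the classical \iid SLLN on each event $\{S_m=m-2k\}$: there
\[\frac{S_n}{n}=\frac{S_m}{n}+\frac{n-m}{n}\cdot\frac{1}{n-m}\sum_{i=m+1}^n X_i\asto 0+p_k=p_k,\]
so in particular $S_n/n\dto p_k$ on that event. Conditioning on $S_m$ and summing over the $m+1$ possible values (exactly as in the proof of Theorem \ref{thm62}) produces the mixture $\sum_{k=0}^m q_k\,\delta_{p_k}$.

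For the CLT part I would use the identity $(2p-1)S_m/m=p_k$ on $\{S_m=m-2k\}$ and split
\[\frac{S_n-n(2p-1)S_m/m}{\sqrt{n}}=\frac{S_m-mp_k}{\sqrt{n}}+\frac{1}{\sqrt{n}}\sum_{i=m+1}^n(X_i-p_k).\]
The first summand is bounded, hence $o(1)$, while the second converges in distribution to ${\cal N}_{0,4r_k(1-r_k)}$ by the classical CLT for the conditionally \iid sequence. Mixing the limits weighted by $q_k$ then yields the claimed mixture distribution. The only point demanding a bit of care is the conditional \iid structure of the post-$m$ increments; this, however, is built into the definition of the restricted memory, so no real obstacle arises.
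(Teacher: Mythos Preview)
Your proposal is correct and follows essentially the same approach as the paper: both condition on $(X_1,\dots,X_m)$ (equivalently on $S_m$), identify the post-$m$ increments as a classical \iid simple random walk with success probability $r_k$, and then mix over the $m+1$ possible values of $S_m$. Your write-up is in fact more explicit than the paper's, which merely records the conditional decomposition and the success probabilities $r_k$ without spelling out the SLLN/CLT step or the handling of the bounded initial term.
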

\begin{proof}
As before we write
\begin{eqnarray*}
\lefteqn{\hspace*{-.9cm}P\Big(\frac{S_n}{n} \le x\Big)= P\Big(\frac{S_n}{n}\le x\mid X_1=1,\dots, X_m=1\Big) \cdot P( X_1=1,\dots, X_m=1)}\\
&& \hspace*{1cm}+\;P\Big(\frac{S_n}{n}\le x\,\big| \,\mbox{exactly one }\!\!-1 \mbox{ among the first  $m\; X$:es\Big)} \\
&& \hspace*{2cm} \times P(\mbox{exactly one }\!\! -1 \mbox{ among the first $m\; X$:es)}\\
&& \hspace*{1cm} +\cdots +P\Big(\frac{S_n}{n}\le x\mid X_1=-1,\dots, X_m=-1\Big) \cdot  P( X_1=-1,\dots, X_m=-1)\\
&&= P\Big(\frac{S_n}{n}\le x\mid X_1=1,\dots, X_m=1\Big) \cdot P( S_m=m)\\
&& \hspace*{1cm}+\;P\Big(\frac{S_n}{n}\le x\mid\mbox{exactly one }\!\! -1 \mbox{ among the first  $m\; X$:es\Big)} 
 \cdot P(S_m=m-2)\\
&&  \hspace*{1cm}+\cdots +P\Big(\frac{S_n}{n}\le x\mid \, X_1=-1,\dots, X_s=-1\Big) \cdot  P( S_m=-m),
\end{eqnarray*}
and observe that in each conditional case we have a random walk with the appropriate success probabilities, i.e., for  $S_m=m-2k$ the sucess probability is $r_k=\big((m-k)p+k(1-p)\big)/m$, and, hence, the expectation is  $p_k=2r_k-1=(m-2k)(2p-1)/m$.
\end{proof}
\begin{remark}\label{rem72}
\emph{(i) The probabilities at the jumps are relatively complicated and therefore not expressed in detail, but $q_0=p^m$ and $q_m=(1-p)p^{m-1}$.\\[1mm]
(ii) A more detailed analysis shows that the probability mass of the limit distribution of $S_n/n$ concentrates near zero  as $m $ increases.}\\[1mm]
\emph{(iii) One easily checks that the variance for each ''branch'' equals $4p(1-p)(n-m)$, which, in turn, is dominated by  $\leq 4p(1-p)n$, which, consequently, tells us that the analog of Theorems \ref{thm52} and \ref{thm61} holds.}\\[1mm]
\emph{(iv) Once again, the case $p=1/2$ is special as described in the two previous sections.}\vsb\end{remark}

\section{Remembering only the recent past 1}\label{ende1}
\setcounter{equation}{0}
This situation is much more complex, because, even though one remembers only recent steps, the path depends on the whole history so far (some remarks on that will be given in Subsection \ref{dep}). Once again we begin by assuming that the elephant only remembers the very last step, which means that $\cfn=\sigma\{X_n\}$. This setting is reminiscent of \cite{volkov}, where one turns over a coin instead of tossing it. The main focus there, however, is on different $p$-values at each step and, e.g., how this may affect phase transitions and behavior at critical values.

We begin, as always,  by assuming that 
$X_1=1$. Then, $E(X_1)=1$, and
\[\exnpf=E(\xnp\mid X_n)=(2p-1)\cdot X_n,\ttt{and, hence,} E(\xnp)=(2p-1)E(X_n),\]
for all $n\geq2$. By iterating this it follows that for, $n\ge 0$,
\bea\label{etnp}E(\xnp)=(2p-1)^{n}E(X_1)=(2p-1)^{n},\eea
and 
\[E (T_{n+1})=\frac{1-(2p-1)^{n+1}}{2(1-p)}.\] 
For the second moment we have, by (\ref{uli5}) and (\ref{uli2}),
\[ E(\tnpz\mid \cgn)=T_n^2 +2T_n(2p-1)X_n+1, \ttt{ hence, } E(\tnpz)=E(T_n^2) + 2(2p-1)E(T_nX_n)+1\,.\]
For the middle term we obtain by (\ref{uli2}), 
\[ E(T_nX_n)= E(X_n^2) + E(T_{n-1}E(X_n \mid {\cal G}_{n-1}))= 1+(2p-1)E(T_{n-1}X_{n-1})\,, \]
which in turn, after iteration, yields
\[E(T_nX_n)=1+\sum_{k=1}^{n-1}(2p-1)^k= \frac{1-(2p-1)^n}{2(1-p)}\,.\]
Now we can calculate the second moment:
\[E(\tnpz) = E(T_n^2) +2(2p-1) \cdot\frac{1-(2p-1)^n}{2(1-p)} +1
= E(T_n^2) +\frac{p}{1-p} -\frac{(2p-1)^{n+1}}{1-p}.\]
By telescoping we obtain
\[E(\tnpz)= \frac{np}{1-p} +{\cal O}(1)\ttt{as}\nifi,\]
which implies the following formula for the asymptotic variance:
\begin{equation}\label{vartnp}
\var(T_{n+1})= \frac{np}{1-p}+{\cal O}(1) \ttt{as} \nifi.
\end{equation}
Noticing that $S_n=X_1T_n$ and  that  $X_1=\pm1$,  a glance at (\ref{etnp}) and (\ref{vartnp}) shows that $\frac{T_n}{n}\pto0$ and  that
$\frac{S_n}{n}\pto0$ as $\nifi$, suggesting the following result:
\begin{theorem}\label{clttn} For $X_1=\pm1$,
\[
\frac{T_n}{\sqrt{n}}\dto {\cal N}_{0, \frac{p}{1-p}}  \ttt{ and\quad} \frac{S_n}{\sqrt{n}}\dto {\cal N}_{0, \frac{p}{1-p}}\ttt{as}\nifi.
\]
\end{theorem}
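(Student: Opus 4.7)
The approach rests on the observation that the sequence $\{X_n\}$ is itself a two-state Markov chain on $\{-1,+1\}$ with symmetric transition $P(X_{n+1}=X_n\mid X_n)=p$. This structure immediately singles out the centered increments
\[
D_{n+1}\;:=\;X_{n+1}-(2p-1)X_n, \qquad n\geq 1,
\]
for which $E(D_{n+1}\mid\cgn)=0$, so that $M_n:=\sum_{k=2}^n D_k$ is a bounded martingale with respect to $\{\cgn\}$. The plan is to apply a martingale CLT to $M_n$ and then recover $T_n$ as essentially a scalar multiple of $M_n$, modulo asymptotically negligible remainders.

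First I would compute $E(D_{n+1}^2\mid\cgn)$ by direct enumeration of the two transition cases: $D_{n+1}=2(1-p)X_n$ with conditional probability $p$ and $D_{n+1}=-2pX_n$ with conditional probability $1-p$, giving $E(D_{n+1}^2\mid\cgn)=4p(1-p)$ and hence $\langle M\rangle_n=4p(1-p)(n-1)$. Since $|D_k|\leq 2$, the Lindeberg condition is trivial, and the martingale CLT yields $M_n/\sqrt{n}\dto{\cal N}_{0,\,4p(1-p)}$ as $\nifi$. Next I would express $T_n$ in terms of $M_n$: iterating $X_k=(2p-1)X_{k-1}+D_k$ produces $X_k=(2p-1)^{k-1}X_1+\sum_{j=2}^k(2p-1)^{k-j}D_j$, and summing in $k$ and interchanging the order of summation yields
\[
T_n\;=\;\frac{X_1\big(1-(2p-1)^n\big)}{2(1-p)}\;+\;\frac{M_n-R_n}{2(1-p)},\qquad R_n:=\sum_{j=2}^n(2p-1)^{n-j+1}D_j.
\]
The deterministic term is $O(1)$; and $R_n$, by orthogonality of the $D_j$'s, has variance $4p(1-p)\sum_{i=1}^{n-1}(2p-1)^{2i}$, uniformly bounded as $|2p-1|<1$, so $R_n/\sqrt{n}\to 0$ in $L^2$. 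Dividing by $\sqrt{n}$ and invoking the martingale CLT then gives $T_n/\sqrt{n}\dto{\cal N}_{0,\,p/(1-p)}$, the collapse $4p(1-p)/(2(1-p))^2=p/(1-p)$ being immediate.

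For $S_n$ I would condition on the value of $X_1$. By the symmetry of the dynamics under $X_1\mapsto -X_1$ recorded in Section~\ref{defs}, conditional on $X_1=+1$ the walk is distributed as $T_n$, while conditional on $X_1=-1$ it is distributed as $-T_n$. Since the limit ${\cal N}_{0,\,p/(1-p)}$ is symmetric, both conditional limits coincide, and the unconditional statement follows by total probability. The main technical obstacle I anticipate is the clean application of the martingale CLT in this non-stationary setting (the marginal law of $X_n$ need not equal the invariant distribution) together with the uniform control of $R_n$; both, however, reduce to routine bounded-difference estimates thanks to the geometric decay of the weights.
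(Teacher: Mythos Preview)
Your argument is correct and genuinely different from the paper's. The paper proves Theorem~\ref{clttn} by the method of moments: Lemma~\ref{momlemma} establishes, by induction, asymptotics for all mixed moments $E(T_n^kX_n)$ and $E(T_n^k)$, and one then checks that the normalised moments match those of ${\cal N}_{0,p/(1-p)}$. (A remark afterwards notes that the result also follows from a black-box Markov-chain CLT, citing \cite{jones}.) You instead carry out an explicit martingale approximation: the Poisson-equation solution $D_{n+1}=X_{n+1}-(2p-1)X_n$ produces a bounded martingale difference sequence with deterministic conditional variance $4p(1-p)$, and your identity $T_n=\dfrac{X_1(1-(2p-1)^n)}{2(1-p)}+\dfrac{M_n-R_n}{2(1-p)}$ with $\var(R_n)$ bounded reduces everything to the martingale CLT and Slutsky. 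This is cleaner and more structural than the moment computations, and unlike the remark it is self-contained rather than a citation; the paper's moment method, on the other hand, avoids any appeal to a martingale CLT and makes the normalising constant emerge from elementary recursions. Your treatment of $S_n$ by conditioning on $X_1$ and invoking the symmetry of the limit is exactly in line with what the paper does.
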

Our next task is to apply the method of moments in order to prove that this is indeed true. We thus wish to prove that
\bea\label{nfmomente}
E\Big(\frac{T_n}{n}\Big)^{m}\to\mu_{m}=\begin{cases}\dfrac{m!}{2^{m/2}(m/2)!}\cdot\Big(\dfrac{p}{1-p}\Big)^{m/2},&\ttt{if $m$ is even,}\\[2mm] 
0,&\ttt{if $m$ is odd.}\end{cases}
\eea
This amounts to lengthy computations of various  higher order mixed moments. The reason for this is that higher order moments of $T_n$ can be expressed as linear combinations of lower order moments of $T_n$ and $X_n$ with the aid of the binomial theorem.

Convergence of mean and variance has already been established above. For  higher order moments we use induction.

Throughout in the following,  $C(p,m)$, with our without an index,  are numerical constants which may differ from line to line and $E(R_n(p,m))$ are quantities of smaller order than the leading term.
\begin{lemma}\label{momlemma}  For $m\geq 1$ we have, as $n \to \infty$,
\bea
E\big((T_n)^{2m-1}X_n\big)&=&(2p-1)E\big((T_{n-1})^{2m-1}\xnm\big)+(2m-1)E(T_{n-1})^{2m-2}+E(R_n(p,m))
\nonumber\\
&\sim& \frac{2m-1}{2(1-p)}\cdot E(T_{n})^{2m-2}\,;\label{87}\\[1mm]
E\big((T_n)^{2m}X_n\big)&=&(2p-1)E\big((T_{n-1})^{2m}\xnm\big)+2m\cdot E(T_{n-1})^{2m-1}+ \nonumber\\
&&\hskip3pc+ m(2m-1)\,(2p-1) E((T_{n-1})^{2m-2}X_{n-1}) + E(R_n(p,m))\nonumber\\
&\sim& C_1(p,m)\cdot n^{m-1}\,;\label{88}\\
E(\tnp)^{2m}&=&E((T_{n})^{2m})+2m(2p-1)\,\,E((T_n)^{2m-1}X_n) +\nonumber\\
&&\hskip3pc+ m(2m-1)E((T_n)^{2m-2})+\,R_n(p,m)\nonumber\\
&=&E((T_n)^{2m})+ \frac{m(2m-1) \,(2p-1)}{1-p} E((T_n)^{2m-2})+E(R_n(p,m))\nonumber\\
&\sim&\frac{(2m)!}{2^m m!}\Big(\frac{p}{1-p}\Big)^m(n+1)^m\,;\label{89}\\
E(\tnp)^{2m+1}&=&E((T_n)^{2m+1})+ (2m+1)(2p-1)E((T_n)^{2m}X_n)\nonumber\\
&&\hskip3pc+ (2m+1)(m-1) E((T_n)^{2m-1}) + E( R_n(p,m))\nonumber\\
&\sim& C_2(p,m)\cdot (n+1)^m\,,\label{810}
\eea
where $E(R_n(p,m))$ denotes individual remainder terms.
\end{lemma}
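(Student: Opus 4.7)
The plan is to prove the four asymptotics simultaneously by induction on $m$, with the base case $m=1$ provided by the second-moment and $E(T_nX_n)$ computations carried out just before the statement. At each level $m$ the natural order is first (\ref{87}), then (\ref{89}) (which feeds on (\ref{87}) and on (\ref{89}) at level $m-1$), then (\ref{88}), and finally (\ref{810}). The common engine throughout is the decomposition $T_{n+1}=T_n+X_{n+1}$ combined with the conditional identities $X_{n+1}^2=1$ and $E(X_{n+1}\mid\mathcal G_n)=(2p-1)X_n$, which together yield $E(X_{n+1}^k\mid\mathcal G_n)=1$ when $k$ is even and $(2p-1)X_n$ when $k$ is odd.

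For (\ref{87}) and (\ref{88}) I would expand $T_n^\ell X_n=(T_{n-1}+X_n)^\ell X_n$ by the binomial theorem with $\ell\in\{2m-1,2m\}$, take conditional expectation with respect to $\mathcal G_{n-1}$ and separate the resulting terms according to the parity of the surviving power of $X_n$. The $k=0$ term reproduces $(2p-1)E(T_{n-1}^\ell X_{n-1})$; the $k=1$ term yields $\ell\,E(T_{n-1}^{\ell-1})$; and (only in (\ref{88})) the $k=2$ term produces an additional $m(2m-1)(2p-1)E(T_{n-1}^{2m-2}X_{n-1})$, whose asymptotics are already known from level $m-1$. All remaining $k\ge 3$ contributions involve $T_{n-1}^{\ell-k}$ and are of strictly smaller polynomial order by the inductive hypothesis; they go into $R_n(p,m)$. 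What remains is a first-order linear recurrence $a_n=(2p-1)a_{n-1}+b_n$ with $|2p-1|<1$ and $b_n$ of polynomial order $n^{m-1}$; Proposition \ref{diff}(i) then delivers $a_n\sim b_{n-1}/(2(1-p))$, which is exactly the stated leading behaviour.

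For (\ref{89}) and (\ref{810}) I would expand $T_{n+1}^{2m}=(T_n+X_{n+1})^{2m}$ (respectively $T_{n+1}^{2m+1}=(T_n+X_{n+1})^{2m+1}$) by the binomial theorem, condition on $\mathcal G_n$, and isolate the contributions from $k=0,1,2$. The $k=0$ piece is $E(T_n^{2m})$ and supplies the telescoping term; $k=1$ contributes $2m(2p-1)E(T_n^{2m-1}X_n)$; and $k=2$ contributes $m(2m-1)E(T_n^{2m-2})$. Substituting (\ref{87}) into the $k=1$ contribution and adding produces the coefficient $\tfrac{m(2m-1)(2p-1)}{1-p}+m(2m-1)=\tfrac{m(2m-1)p}{1-p}$, which is the algebraic identity that upgrades the inductive constant $\tfrac{(2m-2)!}{2^{m-1}(m-1)!}$ at level $m-1$ into $\tfrac{(2m)!}{2^mm!}$ at level $m$ after summing the telescoping increments from $1$ to $n$. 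The parallel scheme, with $2m+1$ replacing $2m$, yields (\ref{810}).

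The main obstacle will be the bookkeeping of $R_n(p,m)$: one has to verify that every binomial contribution with $k\ge 3$ in the above expansions, together with the exponentially small $(2p-1)^n$ tails delivered by Proposition \ref{diff}(i), is of strictly lower polynomial order than the leading term at level $m$. This reduces to uniform moment bounds of the shape $E|T_n|^j\le Cn^{j/2}$ and $|E(T_n^jX_n)|\le Cn^{(j-1)/2}$ for $j\le 2m-1$, both of which follow at once from the inductive hypothesis; once these bounds are in place, all discarded contributions collapse into a single remainder of lower order, and the four asymptotics (\ref{87})--(\ref{810}) emerge in the order indicated.
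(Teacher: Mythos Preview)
Your proposal is correct and follows essentially the same route as the paper: binomial expansion of $(T_{n-1}+X_n)^\ell X_n$ and $(T_n+X_{n+1})^\ell$, conditioning via $E(X_{n+1}^k\mid\mathcal G_n)\in\{1,(2p-1)X_n\}$, isolating the $k=0,1,2$ terms, controlling the $k\ge 3$ contributions by the inductive moment bounds, and then resolving the resulting first-order recurrences (respectively telescoping sums) through Proposition~\ref{diff}(i). The only cosmetic difference is that you prove (\ref{89}) before (\ref{88}); the dependency graph allows either order, and your identification of the key coefficient $\tfrac{m(2m-1)(2p-1)}{1-p}+m(2m-1)=\tfrac{m(2m-1)p}{1-p}$ is exactly the computation the paper carries out.
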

The proof of the lemma amounts to extending the above computations for mean and variance to higher order variants and is deferred to the Appendix, Subsection \ref{pfmomlemma}.\smallskip

\pf{Theorem \ref{clttn}} As already mentioned, the proof exploits the method of moments. For $X_1=1$ the lemma tells us that
\[E\big((T_n/\sqrt{n})^{2m}\big) \to \frac{(2m)!}{2^m m!}\Big(\frac{p}{1-p}\Big)^m \ttt{and that} E\big((T_n/\sqrt{n})^{2m+1}\big) \to 0 \ttt{as}\nifi,\]
which verifies  (\ref{nfmomente}).  For $X_1=-1$ we recall from the end of Section \ref{aux} that even moments remain the same and that odd moments are the same except for a change of sign, which yields the same conclusion.
The limit result for $S_n$ then follows as in Theorem \ref{thmgerw}.\vsb
\begin{remark}\emph{The sequence $\{X_n,\,n\geq1\}$ is a stationary recurrent Markov chain with finite state space which, hence, is uniformly ergodic. The asymptotic normality of $T_n$  therefore also follows from a CLT for Markov chains, see, e.g.,  Corollary 5 of \cite{jones} (cf.\ also \cite{ibralinn}, Theorem 19.1.)}\vsb
\end{remark}
The Markov property also provides a strong law.
\begin{theorem}\label{thm82}
We have
\[\frac{S_n}{n} \asto 0 \ttt{as} n \to \infty.\]
\end{theorem}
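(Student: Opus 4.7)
The plan is to exploit the Markov chain structure already identified in the remark preceding the theorem. As noted there, with the memory reduced to the single most recent step, the sequence $\{X_n,\,n\geq 1\}$ is itself a time-homogeneous Markov chain on the two-point state space $\{-1,+1\}$, with transition probabilities
\[
P(X_{n+1}=X_n\mid X_n)=p,\qquad P(X_{n+1}=-X_n\mid X_n)=1-p.
\]
So the transition matrix is doubly stochastic and, because $0<p<1$, the chain is both irreducible (one can move between the two states in one step) and aperiodic (the self-loop probability $p$ is strictly positive).

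The next step is to identify the stationary distribution. Since the transition matrix is symmetric and doubly stochastic, its unique invariant distribution is $\pi(\{+1\})=\pi(\{-1\})=1/2$, and hence $E_\pi(X)=0$. Birkhoff's ergodic theorem for uniformly ergodic finite-state Markov chains (or equivalently the strong law for such chains, as quoted in the preceding remark) then yields
\[
\frac{S_n}{n}=\frac{1}{n}\sum_{k=1}^{n}X_k\;\asto\;E_\pi(X)=0\ttt{as}\nifi,
\]
regardless of the initial distribution of $X_1$. In particular, the conclusion holds both for the deterministic starting values $X_1=\pm 1$ (giving $T_n/n\asto 0$) and for the randomized case $X_1=\pm 1$ with probabilities $r$ and $1-r$, which is all one needs to assemble $S_n=X_1 T_n$ in the notation of Section \ref{defs}.

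I expect no serious obstacle here: the work has effectively been done by recognizing the Markov property. The only minor subtlety is the initial condition, but since the ergodic theorem applies for any starting distribution on a finite, irreducible, aperiodic chain, this causes no trouble. If one preferred to avoid invoking the general Markov ergodic theorem, an alternative route would be to use the second-moment estimate $\var(T_n)=O(n)$ established in (\ref{vartnp}) together with an exponential decay of covariances $\cov(X_k,X_{k+j})=(2p-1)^{j}$ (which follows by iterating $E(\xnp\mid X_n)=(2p-1)X_n$) to obtain a Borel--Cantelli-type a.s.\ bound along a subsequence, and then pass to the full sequence by monotonicity of $|S_n|$-type arguments; but the ergodic-theorem argument above is cleaner and is what I would write.
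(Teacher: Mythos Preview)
Your proposal is correct and follows essentially the same route as the paper: recognize that $\{X_n\}$ is an ergodic (irreducible, aperiodic) Markov chain on $\{-1,+1\}$ with stationary distribution $(1/2,1/2)$ and mean zero, then invoke the strong law for Markov chains (the paper cites Doob's Theorem~6.1). Your write-up is simply more explicit about the transition matrix and the initial-distribution issue, and your side remark about a Borel--Cantelli alternative is not needed.
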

\begin{proof} The  stationary distribution of the ergodic Markov chain $\{X_n,\,n\geq1\}$ is $(1/2,1/2)$, which has expectation  zero. An application of Theorem 6.1 in \cite{Doob} yields the conclusion.\vsb
\end{proof}

\section{Remembering only the recent past 2}\label{ende2}
\setcounter{equation}{0}
In this section we assume that the elephant remembers the  two most recent steps, that is, at time $n$ the next step is based on the steps $X_{n}$ and $X_{n-1}$. The computations are as before, although  more elaborate. We have, as always, $X_1=1$, $E(X_2\mid {\cal F}_1)=(2p-1)X_1$,  
\[E(X_3\mid {\cal F}_2)=(2p-1)\frac{X_1+X_2}{2}=(2p-1)\frac{1+X_2}{2},\]
and, for $n\geq3$, 
\[E(\xnp\mid \cfn)= E(\xnp\mid \xnm, X_n)=(2p-1)\frac{\xnm+X_n}{2}\]
Computing the moments one obtains the following result. For the proof we refer to the Appendix, Subsection \ref{pflemma91}.
\begin{lemma}\label{lasttwo} As $n\to \infty$,
\beaa
E(X_n) &\to& 0; \\ E(S_n)&\to & \frac{(2p-1)(2p+1)}{4(1-p)};\\ 
\var(S_n/\sqrt{n}) &\to&1+ \frac{(2p-1)(5-2p)}{2(1-p)(3-2p)}=\sigma_2^2\,.
\eeaa
The expectation of $X_n$ tends to zero geometrically fast.
\end{lemma}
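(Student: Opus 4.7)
The plan is to exploit the Markov structure hidden in the two-step memory: although $\{X_n\}$ itself is not Markov, the pair $Y_n=(X_{n-1},X_n)$ is an irreducible, aperiodic Markov chain on $\{\pm 1\}^2$ (with self-loops of probability $p$ at $(1,1)$ and $(-1,-1)$), so for $0<p<1$ it mixes geometrically to a unique stationary distribution $\pi$. All three assertions will then follow by combining a second-order linear recurrence for $e_n=E(X_n)$ with the spectral data of this chain; an alternative would be to mimic the direct higher-moment computations used in Section~\ref{ende1}.

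Taking total expectations in $E(X_{n+1}\mid\cfn)=\tfrac{2p-1}{2}(X_{n-1}+X_n)$ produces
\[
 e_{n+1}=\tfrac{2p-1}{2}(e_n+e_{n-1}),\qquad n\ge 2,\qquad e_1=1,\ e_2=2p-1.
\]
Proposition~\ref{diff}(iii) with $a=b=(2p-1)/2$ applies, and a short case analysis shows that both characteristic roots lie strictly inside the unit disk for every $0<p<1$: if $p<1/2$ the roots are complex with $|\lambda|^2=-a=(1-2p)/2<1$, and if $p>1/2$ the real roots $\lambda_\pm=(a\pm\sqrt{a^2+4a})/2$ satisfy $\lambda_+<1$ and $\lambda_->-1$, each inequality reducing to $|2p-1|<1$. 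Hence $e_n\to 0$ at geometric rate, which is the last claim of the lemma; consequently $E(S_n)=\sum_{k=1}^n e_k$ converges, and its limit is identified by summing the recurrence from $n=2$ to $N$, letting $N\to\infty$, and solving the resulting affine equation for $L=\lim_N E(S_N)$.

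The variance formula is the main obstacle because $X_1=1$ places $Y_2$ outside equilibrium, so a stationary covariance formula is not directly available. I would first solve the balance equations for $Y_n$: the $\pm 1$-symmetry of the transition kernel forces $\pi_{(+,+)}=\pi_{(-,-)}=q$ and $\pi_{(+,-)}=\pi_{(-,+)}=r$, and $q=pq+r/2$ together with $2q+2r=1$ yields $q=1/(2(3-2p))$, $r=(1-p)/(3-2p)$. Hence in equilibrium $E(X_n)=0$ and $\rho_1:=E(X_{n-1}X_n)=2(q-r)=(2p-1)/(3-2p)$. The tower property then shows that the stationary autocovariances $\rho_k=E(X_m X_{m+k})$ obey the same recurrence $\rho_{k+1}=\tfrac{2p-1}{2}(\rho_k+\rho_{k-1})$ for $k\ge 1$ with $\rho_0=1$, and summing it as above gives
\[
 R:=\sum_{k\ge 1}\rho_k=\frac{\rho_1+(2p-1)/2}{2(1-p)}=\frac{(2p-1)(5-2p)}{4(1-p)(3-2p)}.
\]
Expanding $\var(S_n)/n=\tfrac{1}{n}\sum_i\var(X_i)+\tfrac{2}{n}\sum_{i<j}\cov(X_i,X_j)$ and using $\var(X_i)=1-e_i^2\to 1$, the double sum converges to $2R$ by Ces{\`a}ro summation, yielding $\var(S_n)/n\to 1+2R=\sigma_2^2$. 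The delicate point is controlling the non-equilibrium start: geometric ergodicity of $Y_n$ gives $\cov(X_i,X_{i+k})=\rho_k+O(\lambda^{i})$ uniformly in $k$ for some $\lambda<1$, so the transient contributes only $O(1)$ to $\var(S_n)$ and vanishes after division by $n$.
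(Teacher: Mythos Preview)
Your approach is genuinely different from the paper's and is, in outline, a valid alternative. The paper proceeds by brute-force recursion: it sets up and solves first-order difference equations for $E(X_nX_{n-1})$, then for $\zeta_n=E(S_nX_n)$ and $E(S_nX_{n-1})$, feeds these into a recursion for $v_n=E(S_n^2)$, and telescopes. Your route via the stationary law of $Y_n=(X_{n-1},X_n)$ and the second-order recurrence for the autocovariances $\rho_k$ is more structural and avoids the intermediate mixed moments; it also explains why $3-2p$ appears in the denominator (it is the normalizing constant of $\pi$).

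Two points need repair. First, your initial data $e_1=1$, $e_2=2p-1$ correspond to the walk conditioned on $X_1=1$. The limit $(2p-1)(2p+1)/(4(1-p))$ stated in the lemma is for the unconditioned walk with $P(X_1=1)=p$, and the paper's proof indeed takes $E(X_1)=2p-1$, $E(X_2)=(2p-1)^2$. With your initial values the summation argument yields $(2p+1)/(4(1-p))$, off by a factor $2p-1$. (The paper's text just before the lemma is admittedly misleading on this point; and the variance limit is unaffected since $E(S_n)$ remains bounded either way.)

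Second, and more substantively, the transient estimate as written does not close. A bound $\cov(X_i,X_{i+k})=\rho_k+O(\lambda^{i})$ uniform in $k$, once summed over $1\le k\le n-i$ and then over $i$, gives a contribution of order $\sum_i (n-i)\lambda^{i}=O(n)$ to $\var(S_n)$, not $O(1)$; dividing by $n$ would then leave a possibly nonzero constant. What is actually needed is the product bound
\[
E(X_iX_{i+k})-\rho_k=\sum_{y}\big(P(Y_i=y)-\pi(y)\big)\,f_2(y)\,g_k(y)=O(\lambda^{i}\mu^{k}),
\]
where $g_k(y)=E(X_{i+k}\mid Y_i=y)$ satisfies the same recurrence $g_{k+1}=\tfrac{2p-1}{2}(g_k+g_{k-1})$ and hence decays like $\mu^{k}$ with $\mu=\max(|\lambda_1|,|\lambda_2|)<1$. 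With this refinement the double sum is genuinely $O(1)$ and your Ces\`aro argument goes through.
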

\begin{remark}\emph{For $p=1/2$ the process reduces, as usual, to a simple symmetric random walk.}\vsb
\end{remark}
For the following limit theorems we lean on the Markov property (and invite the reader to try the moment method).
\begin{theorem} We have
\[\frac{S_n}{n}\asto 0 \ttt{ and }\frac{S_n}{\sigma_2 \sqrt{n}} \dto {\cal N}_{0,1}\ttt{as}\nifi .\]
\end{theorem}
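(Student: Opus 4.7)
The plan is to exploit that although $\{X_n,\,n\geq 1\}$ is not Markov of order one, the consecutive pair $Y_n:=(X_{n-1},X_n)$ \emph{is} a Markov chain on the four-point state space $\{-1,+1\}^2$. From the conditional formula $E(X_{n+1}\mid\cfn)=(2p-1)(X_{n-1}+X_n)/2$ one reads off the transition: given $Y_n=(x,y)$, the new component $X_{n+1}$ equals $+x$ (resp.\ $-x$) with probability $p/2$ (resp.\ $(1-p)/2$), and $+y$ (resp.\ $-y$) with probability $p/2$ (resp.\ $(1-p)/2$). For $0<p<1$ all four states communicate (from $(1,1)$ one reaches $(1,-1)$ in one step, from $(1,-1)$ one reaches $(-1,\pm1)$ each with probability $1/2$, and symmetrically from the negated states), and self-loops exist at $(1,1)$ and $(-1,-1)$; the chain is therefore irreducible and aperiodic on a finite space, hence uniformly ergodic. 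By the obvious symmetry under $X\mapsto -X$ the stationary law $\pi$ satisfies $\pi(x,y)=\pi(-x,-y)$, so $E_\pi[X]=0$.

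The strong law is then immediate from Birkhoff's ergodic theorem applied to the bounded functional $f(x,y)=y$: whatever the initial distribution,
\[
\frac{S_n}{n}=\frac{1}{n}\sum_{k=1}^{n}f(Y_k)\asto E_\pi[f]=0 \ttt{as}\nifi.
\]
(Equivalently one can invoke Theorem 6.1 of \cite{Doob} as done in Theorem \ref{thm82}.)

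For the central limit theorem I would invoke the CLT for uniformly ergodic Markov chains already cited in Section~\ref{ende1} (Jones, Corollary~5, or \cite{ibralinn}, Theorem~19.1); since $f$ is bounded and the chain uniformly ergodic, this yields $S_n/\sqrt n \dto {\cal N}_{0,\sigma^2}$ for some $\sigma^2\geq 0$. What remains is to identify $\sigma^2=\sigma_2^2$. Here Lemma~\ref{lasttwo} already delivers $\var(S_n/\sqrt n)\to \sigma_2^2$; combined with uniform integrability of $(S_n/\sqrt n)^2$, the convergence of variances forces $\sigma^2=\sigma_2^2$, whence $S_n/(\sigma_2\sqrt n)\dto {\cal N}_{0,1}$. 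The one potentially delicate point is establishing uniform integrability: this can be obtained either from a uniform bound $E(S_n^4)=O(n^2)$, produced by extending the fourth-moment recursions used in the appendix proof of Lemma~\ref{lasttwo}, or from the general moment-convergence theory available for bounded functionals of uniformly ergodic chains. Should one prefer to avoid this identification trick altogether, $\sigma^2$ can be computed directly via the Poisson equation for the four-state chain and verified algebraically to equal the expression $1+(2p-1)(5-2p)/[2(1-p)(3-2p)]$ from Lemma~\ref{lasttwo}; the arithmetic is routine but tedious, which is presumably why the authors route the argument through the variance computation.
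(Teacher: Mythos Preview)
Your proof is correct and is essentially the paper's own argument made explicit: the paper observes that $\{X_n\}$ is a Markov chain of order two and cites \cite{herkenrath,herkenetal} to reduce to the standard ergodic/CLT theory, which amounts precisely to your pair-chain embedding $Y_n=(X_{n-1},X_n)$ (the remark following the theorem in the paper says exactly this). Your discussion of the variance identification is more careful than the paper's, which simply writes ``with the moments as calculated above''; for a finite-state chain with a bounded functional the CLT variance automatically equals $\lim_n\var(S_n)/n$, so Lemma~\ref{lasttwo} suffices without a separate uniform-integrability argument.
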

\begin{proof}
The sequence $\{X_n,\,n\geq1\}$ now forms a Markov chain of order two. Theorem 6.1 in \cite{Doob} yields the strong law, and the results in \cite{herkenrath}, Section 3, or \cite{herkenetal}, combined with Corollary 5 of \cite{jones}, yield the asymptotic normality with the moments as calculated above.\vsb
\end{proof}
\begin{remark} \emph{If we suppose  that the elephant remembers a fixed but finite number, $k$ say, of the most recent steps, the sequence of steps forms a Markov chain of order $k$, and  we obtain, by (basically) the  same arguments as above that $S_n/\sqrt{n}$ will be asymptotically normal
(a Markov chain of order $k$ can be considered as a $k$-dimensional Markov chain and use e.g.\ \cite{herkenetal}).}\vsb
\end{remark}

\section{Remembering the distant as well as the recent past 1}\label{beidesa}
\setcounter{equation}{0}
Next we consider the case when the elephant has a clear memory of the early steps as well as the very recent ones. 

One can think of a(n old) person who remembers the early childhood and events from the last few days but nothing in between. The most elementary case is $\cfn =\sigma\{X_1,X_n\}$, for all $n\geq2$. 
Following the approach of earlier variants we begin by assuming that $X_1=1$. Then, for  $n\geq2$,
\[E(X_2)=E\big(E(X_2\mid X_1)\big)=E\big((2p-1)X_1\big)=2p-1,\]
and
\beaa
E(\xnp)&=&E\big(E(\xnp\mid\cfn)\big)=E\big(E(\xnp\mid X_1,X_n)\big)=(2p-1)E\Big(\frac{X_1+X_n}{2}\Big)\\
&=&(2p-1)E\Big(\frac{1+X_n}{2}\Big)=\frac{2p-1}{2}\cdot(1+E(X_n)).\eeaa
Exploiting Proposition \ref{diff}(i) we obtain, for $n\ge 1$,
\bea \label{expxnn}
E(X_n)=\frac{2p-1}{3-2p}+\Big(\frac{2p-1}{2}\Big)^{n-1}\cdot\frac{4(1-p)}{3-2p}\,,
\eea 
and, hence, that
\bea\label{exptnn}
E(T_n)&=&1+(2p-1) + (n-2)\frac{2p-1}{3-2p}+\frac{4(1-p)}{3-2p}\sum_{k=1}^n \Big(\frac{2p-1}{2}\Big)^{k-1}\nonumber\\
&=& n\cdot\frac{2p-1}{3-2p} +\frac{8(1-p)}{(3-2p)^2} +o(1)\ttt{as}\nifi\,.
\eea
Next we note that $E(T_1^2)=1$, and, by (\ref{uli5}), that, for $n\geq1$,
\bea\label{exptnzprel}
E(\tnpz)=E(T_n^2)+(2p-1)E(T_n) + (2p-1) E(T_nX_n)+1.\eea
In order to establish a difference equation for the second moment we first have to compute the mixed moment. For the computational details we refer to  Appendix \ref{2mom10} and obtain  (formula (\ref{var10})),
\[
E(T_n^2)=\frac{(2p-1)^2}{(3-2p)^2}\cdot n^2+\Big(1+\frac{(2p-1)}{(3-2p)^3}(4p^2-40p+35)\Big)\cdot n +o(n).
\]
Joining the expressions for the first two moments, finally, tells us that the variance is linear in $n$:
\bea\label{vartnzz}
\var(T_n)&=&n^2\cdot\frac{(2p-1)^2}{(3-2p)^2}+n\cdot \Big(1+ \frac{(2p-1)}{(3-2p)^3}(4p^2-40p+35)\Big)\nonumber\\
&&\hskip3pc -\Big( n\cdot\frac{2p-1}{3-2p} +\frac{8(1-p)}{(3-2p)^2}\Big)^2+o(n)\nonumber\\
&=&n\cdot\sigma_T^2 +o(n)\ttt{as}\nifi\,,
\eea
where
\bea\label{sigma10}
\sigma_T^2=1+ \frac{(2p-1)}{(3-2p)^3}(4p^2-24p+19).
\eea
Given the expressions for mean and variance, a weak law is immediate:
\bea\label{wllntn10}
\frac{T_n}{n}\pto \frac{2p-1}{3-2p}\ttt{as}\nifi.
\eea
In analogy with our earlier results this suggets that $T_n$ is asymptotically normal. That this is, indeed, the case follows from the fact that $\{T_n,\ n\geq1\}$ is, once again,  a uniformly ergodic Markov chain, 
since the only random piece from the past is the previous step. We may thus apply Corollary 5 of \cite{jones} (cf.\ also \cite{ibralinn}, Theorem 19.1) to conclude that $T_n-E(T_n)$ is asymptotically normal with mean zero and variance $\sigma^2_Tn$, with $\sigma^2_T$ as defined in (\ref{sigma10}), which, in view of (\ref{exptnn}), establishes  that
\bea\label{clt10prel}
\frac{T_n-\frac{2p-1}{3-2p}n}{\sigma_T\sqrt{n}}\dto {\cal N}_{0,\,1} \ttt{as}\nifi.
\eea
An appeal to the disussion at the end of Section \ref{defs} now allows us to conclude that 
\beaa
 E(S_n)&=&p E(T_n)+(1-p)E(-T_n)=(2p-1)E(T_n),\\
E(S_n^2)&=&E(T_n^2),\\
\var (S_n)&=&E(T_n)^2-(2p-1)^2(E(T_n))^2,
\eeaa
which tells us that 
\bea\label{snlimit}
E(S_n/n)\to\frac{(2p-1)^2}{3-2p}\ttt{and}\var(S_n/n)\to
\sigma_S^2 =4p(1-p)\frac{(2p-1)^2}{(3-2p)^2}\ttt{as}\nifi.
\eea
Furthermore, in analogy to Theorem \ref{thm52}, we arrive at the following asymptotic distributional behavior of $S_n$:
\begin{theorem}\label{snast} We have
\[\frac{S_n}{n}\dto S=\begin{cases}\phantom{-}\dfrac{2p-1}{3-2p},&\ttt{with probability}p,\\[3mm]
 -\dfrac{2p-1}{3-2p},&\ttt{with probability}1-p,\end{cases}\ttt{as}\nifi.\]
Moreover, $E(S_n/n)^r\to E(S^r)$ for all $r>0$, since $|S_n/n|\leq 1$ for all $n$.
\end{theorem}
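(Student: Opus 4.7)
The plan is to reduce the statement for $S_n$ to the already-established weak law (\ref{wllntn10}) for $T_n$ by conditioning on the value of $X_1$, and then to upgrade $L^r$ moment convergence via bounded convergence.

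First, recall from the discussion at the end of Section \ref{defs} that, because the dynamics of the walk when $X_1=-1$ are the mirror image (in the time axis) of the dynamics when $X_1=1$, we have the distributional identities
\[
\bigl(S_n \mid X_1=1\bigr)\deq T_n,\qquad \bigl(S_n \mid X_1=-1\bigr)\deq -T_n,
\]
and, since we are in the regime $r=p$, $P(X_1=1)=p$ and $P(X_1=-1)=1-p$.

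Next, (\ref{wllntn10}) gives $T_n/n\pto (2p-1)/(3-2p)$, so $T_n/n\dto \delta_{(2p-1)/(3-2p)}$ and, symmetrically, $-T_n/n\dto\delta_{-(2p-1)/(3-2p)}$. For any bounded continuous $f$ we then condition on $X_1$ to obtain
\[
E f(S_n/n)= p\,E f(T_n/n)+(1-p)\,E f(-T_n/n)\;\longrightarrow\; p\,f\!\Big(\tfrac{2p-1}{3-2p}\Big)+(1-p)\,f\!\Big(-\tfrac{2p-1}{3-2p}\Big),
\]
which is exactly the expectation of $f(S)$; this proves the first claim.

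For the moment statement, observe that $|X_k|=1$ for every $k$, hence $|S_n|\le n$ and $|S_n/n|\le 1$ almost surely. Thus $|S_n/n|^r$ is uniformly bounded by $1$ for any $r>0$, and the bounded convergence theorem together with the just-established weak convergence yields $E(S_n/n)^r\to E(S^r)$. I do not expect any genuine obstacle here; the only subtlety is remembering that the symmetry argument of Section \ref{defs} applies to the full conditional distribution of $S_n$, not merely to its first two moments, which is precisely what licenses the conditioning step.
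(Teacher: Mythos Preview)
Your proposal is correct and follows essentially the same route the paper intends: the paper states Theorem \ref{snast} ``in analogy to Theorem \ref{thm52}'', whose proof also conditions on the value of $X_1$, uses the weak law for $T_n$ on each branch, and then combines the two branches. Your write-up is simply a more explicit version of that argument, and your handling of the moment convergence via $|S_n/n|\le 1$ and bounded convergence matches the paper's one-line justification.
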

\begin{remark}\label{rem10}
\emph{Comparing this  with Theorem \ref{thm52} we see that the jump points are closer together here. This can be explained by the fact that the current random variables are less dependent than those in Section 5.}\vsb \end{remark}
Finally, by combining (\ref{clt10prel}) with the obvious analog for the case $X_1=-1$, asymptotic normality follows with a \emph{random} centering:
\begin{theorem}\label{thmclt10} We have
\[\frac{S_n-\frac{(2p-1)X_1}{3-2p}n}{\sigma_T\sqrt{n}}\dto {\cal N}_{0,1} \ttt{as}\nifi.
\]
\end{theorem}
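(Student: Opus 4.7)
The plan is to reduce Theorem \ref{thmclt10} to the already-established conditional CLT (\ref{clt10prel}) by conditioning on the value of the random first step $X_1$, in complete analogy with the proof of Theorem \ref{thm62}(a). Since the setup of Section \ref{defs} takes $X_1 = \pm1$ with probabilities $p$ and $1-p$, the law of total probability splits the desired limit distribution into two branches, each of which can be handled by (\ref{clt10prel}) together with the mirror-image symmetry recalled at the end of Section \ref{defs}.

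On the branch $\{X_1=+1\}$, which has probability $p$, we have $S_n = T_n$ identically and $(2p-1)X_1/(3-2p) = (2p-1)/(3-2p)$, so (\ref{clt10prel}) gives directly
\[
P\Big(\frac{S_n-\frac{(2p-1)X_1}{3-2p}n}{\sigma_T\sqrt n}\le x\;\Big|\;X_1=+1\Big)
=P\Big(\frac{T_n-\frac{2p-1}{3-2p}n}{\sigma_T\sqrt n}\le x\Big)\to \Phi(x)\ttt{as}\nifi,
\]
where $\Phi$ denotes the standard normal distribution function.

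On the branch $\{X_1=-1\}$, of probability $1-p$, the mirror-image discussion at the end of Section \ref{defs} tells us that $S_n \deq -T_n$ with $T_n$ still denoting the walk that starts at $+1$, and the centering term becomes $-(2p-1)n/(3-2p)$. Hence
\[
\frac{S_n-\frac{(2p-1)X_1}{3-2p}n}{\sigma_T\sqrt n}\;\Big|\;\{X_1=-1\}
\;\deq\;-\frac{T_n-\frac{2p-1}{3-2p}n}{\sigma_T\sqrt n},
\]
which by (\ref{clt10prel}) converges in distribution to $-{\cal N}_{0,1}$, and by symmetry of the standard normal this limit is again ${\cal N}_{0,1}$. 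Combining the two branches via total probability gives
\[
P\Big(\frac{S_n-\frac{(2p-1)X_1}{3-2p}n}{\sigma_T\sqrt n}\le x\Big)\to p\,\Phi(x)+(1-p)\,\Phi(x)=\Phi(x),
\]
at every continuity point $x$ of $\Phi$, which is the desired conclusion.

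No serious obstacle arises: the real work is contained in (\ref{clt10prel}) (where the Markov-chain/uniform-ergodicity argument, the variance formula $\sigma_T^2$, and the mean asymptotics were established), and the present result merely integrates that conditional CLT against the two-point law of $X_1$. The only matter requiring care is the sign bookkeeping in the mirroring step, which is why the centering $(2p-1)X_1 n/(3-2p)$ is chosen to be a \emph{random} quantity proportional to $X_1$ rather than a deterministic one.
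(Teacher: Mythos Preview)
Your proof is correct and coincides with the second of the two approaches the paper itself offers: the paper first notes the factorization $\big(S_n-\tfrac{(2p-1)X_1}{3-2p}n\big)/(\sigma_T\sqrt n)=X_1\cdot\big(T_n-\tfrac{2p-1}{3-2p}n\big)/(\sigma_T\sqrt n)$ and appeals to the argument of Theorem~\ref{thm53}, and then remarks that one may ``alternatively, condition on the value of $X_1$ and proceed as in the proof of Theorem~\ref{thm62}'', which is exactly what you do. The sign bookkeeping and the use of the mirror symmetry are handled correctly.
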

\begin{proof}  We first note that it follows from the discussion following (\ref{clt10prel}) that the CLT there remains true when $X_1=-1$ with a $+$ replacing the $-$ in the numerator. We thus may argue as in the proof of Theorem \ref{thm52}, via the fact that
\[\frac{S_n-\frac{(2p-1)X_1}{3-2p}n}{\sigma_T \sqrt{n}}
=X_1\cdot\frac{T_n-\frac{(2p-1)}{3-2p}n}{\sigma_T \sqrt{n}}.\]
Alternatively, condition on the value of 
$X_1$ and proceed as in the proof of Theorem \ref{thm62}. \vsb
\end{proof}

\section{Remembering the recent as well as the distant past 2}\label{beidesb}
\setcounter{equation}{0}
In this section we extend the previous one in that we assume that $\cfn =\sigma\{X_1,X_2,X_n\}$, for all $n\geq3$. 
Following the approach of earlier variants we begin by assuming that $X_1=X_2=1$. Then $E(X_1)=E(X_2)=1$, $E(X_3)=(2p-1)$  and, for $n\geq3$,
\[E(\xnp)=E\big(E(\xnp\mid\cfn)\big)=E\big(E(\xnp\mid X_1,X_2,X_n)\big)=\frac{2p-1}{3}\cdot(2+E(X_n)).\]
Exploiting Proposition \ref{diff}(i) yields
\bea \label{expxnnn}
E(X_n)=\frac{2p-1}{2-p}+\Big(\frac{2p-1}{3}\Big)^{n-1} \Big(1-\frac{2p-1}{2-p}\Big)
=\frac{2p-1}{2-p}+\frac{3(1-p)}{2-p}\Big(\frac{2p-1}{3}\Big)^{n-2}\,,
\eea 
and, hence,
\bea\label{exptnnn}
E(T_n)&=&1+1+(2p-1)+ (n-3) \cdot\frac{2p-1}{2-p}+\frac{3(1-p)}{2-p}\sum_{k=4}^{n}\Big(\frac{2p-1}{3}\Big)^{k-2}\nonumber\\
&=& n\cdot\frac{2p-1}{2-p} + \frac{3(1-p)(7-2p)}{2(2-p)^2}+o(1)\ttt{as}\nifi\,.
\eea
As for second moments,  $E(T_1)^2=1$, $E(T_2)^2=4$, 
$E(T_3)^2=E(1+1+X_3)^2=4+4E(X_3)+1=4+4(2p-1)+1=8p+1$, and, generally, that,
\bea\label{exptnzprel2}
E(\tnpz)=E(T_n^2)+2E(T_n\xnp)+1.\eea
Concerning the mixed moments and other details we refer to  Appendix \ref{2mom11}, from which  we obtain
\[
E(T_n^2)= n^2\cdot\frac{(2p-1)^2}{(2-p)^2}+n\cdot\Big(1+\frac{(2p-1)}{(2-p)^3}\cdot(5p^2-32p+26)\Big)+o(n).
\]
The variance, finally, turns out as
\bea\label{vartnnn}
\var(T_n)= n\cdot\sigma_T^2+o(n),\ttt{where}\sigma_T^2=1+\frac{(2p-1)}{(2-p)^3}\cdot(5-5p-p^2).
\eea
Following the path of the previous section we now immmediately obtain a weak law:
\bea\label{wllntn11}
\frac{T_n}{n}\pto \frac{2p-1}{2-p}\ttt{as}\nifi.
\eea

 It remains to consider the general case with arbitrary $X_1$ and $X_2$. There is a slight change here from the previous section. Namely, we first have the case when $X_1=X_2=-1$, for which the arguments from the previous section carry over without change, that is, the mean equals $E(-T_n)$ and the second moment equals $E(\tnz)$. However, now we also have a mixed case which behaves somewhat differently.

Namely, consider the case when the first two summands are not equal; $X_1+X_2=0$, $X_1X_2=-1$. Then, 
\[E(X_3)=E\big(E(X_3\mid X_1,X_2)\big)=E\Big((2p-1)\frac{X_1+X_2}{2}\Big)=(2p-1)E(0)=0,\] 
and, for $n\geq3$, 
\beaa
E(\xnp)&=&E\big(E(\xnp\mid\cfn)\big)=E\big(E(\xnp\mid X_1,X_2,X_n)\big)=\frac{2p-1}{3}\cdot\big(0+E(X_n)\big)\\
&=&\frac{2p-1}{3}E(X_n)=\cdots=CE(X_3)=0,\eeaa
from which we conclude that, for $n\geq2$,
\bea\label{exptnnnn}
E(T_n)=0.
\eea
For the calculation of the second moment we refer again to Appendix \ref{2mom11} and find that
\[E(\tnz)=n\cdot\frac{1+p}{2-p}+o(n)=\var (\tnp)\ttt{as}\nifi,\]
where the last equality is due to the fact that $E(T_n)=0$. The weak law now runs slightly differently, in that
\bea\label{wllntn11null}
\frac{T_n}{n}\pto0\ttt{as}\nifi.
\eea
We note in passing that the mean is linear  in $n$ and that  the second moment is of order $n^2$ when the first two summands are equal, whereas  the mean  is zero and the second moment is linear in $n$ when they are not. However, the variance is linear in $n$ in all cases.

As for central limit theorems, the main arguments are the same as in Section \ref{beidesa}, in that
\bea\label{clt11prel}
\frac{T_n\pm\frac{2p-1}{2-p}n}{\sigma_T\sqrt{n}}\dto {\cal N}_{0,1} \ttt{as}\nifi,
\eea
for the cases $X_1=X_2=-1$ and $X_1=X_2=1$, respectively, and
\bea\label{cltprel11null}
\frac{T_n}{\sqrt{n\cdot\frac{1+p}{2-p}}}\dto {\cal N}_{0,1} \ttt{as}\nifi,
\eea
when the first two summands are unequal.

Switching to moments of $S_n$, using $T_n^+$, $T_n^-$ and $T_n^0$ for the three cases, we obtain,   
\beaa
 E(S_n)&=&p^2 E(T_n^+)+ (1-p)\cdot E(T_n^0) + p(1-p)E(T_n^-)=p(2p-1)E(T_n^+),\\[2mm]
E(S_n^2)&=&p^2E((T_n^+)^2)+ (1-p)\cdot E((T_n^0)^2) + p(1-p)E((T_n^-)^2).\\
\eeaa
Collecting the various pieces tells us that
\[
E(S_n/n)\to\frac{p(2p-1)^2}{2-p}\ttt{and}
\var(S_n/n)\to \frac{p(1-p) (2p-1)^2(4p^2+1)}{(2-p)^2}\ttt{as}\nifi.
\]
Finally, by modifying our earlier results of this kind, one ends up as follows: 
\begin{theorem}\label{snast2} We have
\beaa
&&\frac{S_n}{n}\dto S=\begin{cases}\phantom{-}\dfrac{2p-1}{2-p},&\ttt{with probability}p^2,\\[3mm]
\phantom{5555}0,&\ttt{with probability}1-p,\\[3mm]
 -\dfrac{2p-1}{2-p},&\ttt{with probability}p(1-p),\end{cases}\ttt{as}\nifi.
\eeaa
Morevover, $E(S_n/n)^r\to E(S^r)$ for all $r>0$, since $|S_n/n|\leq 1$ for all $n$.
\end{theorem}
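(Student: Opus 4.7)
The plan is to decompose the probability space according to the four possible realizations of $(X_1,X_2)$ and then apply, case by case, the weak laws for the conditional processes that were established earlier in the section. Since $P(X_2=X_1\mid X_1)=p$ and $P(X_1=1)=p$, the four atoms of $(X_1,X_2)$ carry weights $P(X_1=X_2=1)=p^2$, $P(X_1=X_2=-1)=p(1-p)$, and $P(X_1X_2=-1)=(1-p)^2+p(1-p)=1-p$. For every bounded continuous $f$,
\[
E f(S_n/n)= p^2 E f(T_n^+/n)+p(1-p) E f(T_n^-/n)+(1-p) E f(T_n^0/n),
\]
where $T_n^+,T_n^-,T_n^0$ denote the partial sums conditioned on $X_1=X_2=1$, on $X_1=X_2=-1$, and on $X_1+X_2=0$, respectively.

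Next I would invoke the three ingredients already present. First, from (\ref{wllntn11}) we have $T_n^+/n\pto(2p-1)/(2-p)$. Second, by the symmetry remark at the end of Section \ref{defs} (the walk started at $(-1,-1)$ is the time-axis reflection of the walk started at $(1,1)$, which preserves absolute moments and changes signs of means), $T_n^-$ has the same distribution as $-T_n^+$, hence $T_n^-/n\pto -(2p-1)/(2-p)$. Third, (\ref{wllntn11null}) gives $T_n^0/n\pto 0$. Combining these three convergences with the conditional decomposition, and noting that the three limit atoms are distinct (for $p\neq 1/2$, and the statement is trivial for $p=1/2$ since all three collapse to $0$), we obtain
\[
F_{S_n/n}(x)\to p^2\mathbf{1}\{x\geq (2p-1)/(2-p)\}+(1-p)\mathbf{1}\{x\geq 0\}+p(1-p)\mathbf{1}\{x\geq -(2p-1)/(2-p)\}
\]
at every continuity point, which is the claimed three-point limit.

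Finally, moment convergence is immediate because $|X_k|=1$ forces $|S_n/n|\leq 1$, so for every $r>0$ the sequence $(S_n/n)^r$ is uniformly bounded, and bounded convergence promotes the weak convergence to convergence of all moments $E(S_n/n)^r\to E(S^r)$. There is essentially no obstacle here; the only mild care needed is the symmetry argument that identifies $T_n^-$ with $-T_n^+$ in distribution (so that one does not have to redo the computations of Appendix \ref{2mom11} in the $X_1=X_2=-1$ case), and the verification that the $(1,-1)$ and $(-1,1)$ branches give the same limit law for $T_n^0/n$, which is transparent since from the third step onward the conditional mean $E(X_{k+1}\mid\mathcal F_k)=(2p-1)(X_1+X_2+X_k)/3$ depends on $X_1+X_2$ only.
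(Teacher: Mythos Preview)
Your proof is correct and follows exactly the route the paper intends: condition on the four realizations of $(X_1,X_2)$ (collapsed to three cases via $X_1+X_2$), apply the weak laws (\ref{wllntn11}) and (\ref{wllntn11null}) together with the symmetry from Section~\ref{defs} for the $X_1=X_2=-1$ branch, and then use uniform boundedness for the moment convergence. The paper itself gives no explicit argument here, merely pointing back to ``modifying our earlier results of this kind'' (i.e., the proofs of Theorems~\ref{thm61} and~\ref{snast}), so your write-up simply spells out what the authors leave implicit.
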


We finally wish to combine the asymptotic normality for the three different beginnings of the process in order to arrive at a limit theorem for the $S$-process. This works (in theory) the same way as in Section \ref{beidesa}. However, there is a problem with the variance. Namely, in Theorem \ref{thmclt10} both cases had the same variance, whereas here the variance, when $X_1$ and $X_2$ are equal, is not the same as when they are different. Nevertheless, here is the result.\begin{theorem}\label{thmclt11} We have
\[\frac{S_n-\frac{(2p-1)(X_1+X_2)/2}{2-p}n}{\sqrt{n}}\dto p\cdot{\cal N}_{0,\sigma_T^2}
+(1-p) \cdot{\cal N}_{0,(1+p)/(2-p)} \ttt{as}\nifi,
\]
with $\sigma_T^2$ as given in (\ref{vartnnn}).
\end{theorem}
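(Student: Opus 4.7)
The plan is to condition on the three possible configurations of $(X_1,X_2)$ and assemble the limit from the case-specific central limit theorems already available in this section. Under the convention $r=p$, these three events have probabilities $P(X_1=X_2=1)=p^2$, $P(X_1=X_2=-1)=p(1-p)$, and $P(X_1+X_2=0)=1-p$.

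Writing $U_n$ for the normalized quantity on the left-hand side of the claim, the law of total probability gives
\begin{equation*}
P(U_n\le x)=p^2\,P(U_n\le x\mid X_1=X_2=1)+p(1-p)\,P(U_n\le x\mid X_1=X_2=-1)+(1-p)\,P(U_n\le x\mid X_1+X_2=0).
\end{equation*}

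On $\{X_1=X_2=1\}$, the variable $U_n$ reduces to $(T_n-\tfrac{2p-1}{2-p}n)/\sqrt{n}$, which by (\ref{clt11prel}) converges in distribution to ${\cal N}_{0,\sigma_T^2}$. On $\{X_1=X_2=-1\}$, the symmetry discussion at the end of Section \ref{defs} identifies the conditional law of $S_n$ with that of $-T_n$ from the first case, so $U_n$ has the same distribution as the negative of the previous expression; by symmetry of the centered normal, the limit is again ${\cal N}_{0,\sigma_T^2}$. Finally, on $\{X_1+X_2=0\}$ the centering vanishes, $U_n$ becomes $T_n/\sqrt{n}$ with $T_n$ as in (\ref{cltprel11null}), and that display yields the limit ${\cal N}_{0,(1+p)/(2-p)}$.

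Substituting the three conditional limits back into the decomposition and using $p^2+p(1-p)=p$ produces the advertised mixture at every continuity point $x$. There is no serious obstacle: the only point that might invite confusion is that the two extremal beginnings $X_1=X_2=\pm1$ must share the same limiting variance $\sigma_T^2$ in order to collapse into a single normal component of total weight $p$, and this is precisely the content of the symmetry remark preceding (\ref{clt11prel}). The third case has a genuinely different variance and is therefore kept as a separate component in the mixture.
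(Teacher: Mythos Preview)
Your proof is correct and follows exactly the route indicated in the paper: condition on the value of $(X_1+X_2)/2$ and proceed as in the proof of Theorem~\ref{thm62}, invoking the branch-wise central limit theorems (\ref{clt11prel}) and (\ref{cltprel11null}) and then collapsing the two equal-start cases via $p^2+p(1-p)=p$. There is nothing to add.
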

\begin{proof} The conclusion follows by conditioning on the value of $(X_1+X_2)/2$, and proceeding as in the proof of Theorem \ref{thm62}.\vsb
\end{proof}

\section{Miscellania}\label{anm}
\setcounter{equation}{0}

We close by mentioning some further specific models and by describing some problems and challenges for further research.

\subsection{More on restricted memories}\label{more}
\textbf{(i)} The next logical step would be to check the case when $\cfn=\sigma\{X_1,X_{n-1},X_n\}$. By modifying the computations in Appendix \ref{pflemma91}, setting $a=\frac{2p-1}{3}$ and $d=3a^2$, we find that
\beaa
\mu_{n+1}&=&E(\xnp)=E\big(E(\xnp\mid X_1,\xnm,X_n)\big)=\frac{2p-1}{3}E(X_1+\xnm+X_n)\\ &=&a\big((2p-1)+\mu_{n-1}+\mu_n\big)= a(\mu_{n-1}+\mu_n)+d,\eeaa
after which Proposition \ref{diff}(iv), and a glance at the computations in Appendix \ref{pflemma91}, tell us that
\[E(X_n)= \frac{(2p-1)^2}{5-4 p} + {\cal O }(q^n)\ttt{as}\nifi.\]
where $q=\max\{|\lambda_1|,|\lambda_2|\}<1$, with $\lambda_i$, $i=1,2$, defined in Appendix \ref{pflemma91}, and it follows that
\[ E(S_n)\sim n\,\frac{(2p-1)^2}{5-4p} \ttt{as}\nifi.\]
If $\cfn=\sigma\{X_1,X_2,X_{n-1},X_n\}$, then, with $a=\frac{2p-1}{4}$ and $d=2p(2p-1)^2/4$, one similarly obtains that
\[E(S_n)\sim n\,\frac{p\,(2p-1)^2}{3-2p} \ttt{as}\nifi.\]
In fact, \emph{theoretically\/} it is possible to obtain results of the above kind for any fixed number of early and/or late memory steps.

\smallskip\noindent
\textbf{(ii)} A more subtle case is when the number of memory steps depends on $n$, such as $\log n$ or $\sqrt{n}$.

\smallskip\noindent
\textbf{(iii)} Another model is when the elephant remembers everything \emph{except\/} the first step, more generally,
the elephant remembers all but the first $k$ steps for some $k\in \mathbb{N}$. Set $p_k=P(X_{k+1}=1)$,
$V_n=X_{k+1}+\dots X_n$, and ${\cal H}_n=\sigma\{X_{k+1},\dots X_n\}$, and let $n\ge k+1$. Then 
\[ E(X_{n+1}\mid {\cal H}_n)=(2p-1)\frac{V_n}{n-2} \ttt{ and thus } E(V_{n+1}\mid {\cal H}_n) =\tilde{\gamma}_n V_n,\]
where $\gamma_n= (n-3+2p_k)/(n-2)$. With
\[ \tilde{a}_n=\prod_{k=3}^n \tilde{\gamma}_k^{-1}= \frac{\Gamma(n-1)\, \Gamma(2p_k)}{\Gamma(n-2+2p_k)}\]
one can, as  in \cite{bercu}, show that  $\tilde{a}_n V_n$ is a martingale. From the same paper it follows, provided that $0<p_k<3/4$, that
\[ \frac{V_n}{\sqrt{n-2}} \dto {\cal N}_{0,1/(4-3p_k)}  \ \ttt{ as } n \to \infty,\]
which  implies that
\[ \frac{S_n}{\sqrt{n}} \dto {\cal N}_{0,1/(4-3p_k)}  \ \ttt{ as } n\to \infty.\]
The quantity $p_k$ depends on the construction used for the $k$ steps $X_2,...,X_{k+1}$. \smallskip

Other cases one might think of is when the memory covers everything \emph{except\/}
\begin{itemize}
\item  the last $j$ steps;
\item the first $k$ steps and the last $j$ steps;
\item the first $\alpha\log n$ steps and or the last $\beta\log n$ steps for some $\alpha, \beta>0$;
\item the first $\alpha\sqrt{n}$ steps and or the last $\beta\sqrt{n}$ steps for some $\alpha, \beta>0$;
\item the first $\alpha\log n$ steps and or the last $\beta\sqrt{n}$ steps for some $\alpha, \beta>0$;
\item and so on, aiming at more general (final) results.
\end{itemize}

\subsection{Phase transition}
The results of Bercu \cite{bercu} show that for the full memory one has a phase transition at $p=3/4$. There is no such thing in our results. An obvious, as well as interesting, question would be to find the breaking point. There exist some papers on this topic using simulations, see e.g., \cite{scv,csv, moura} and further  papers cited therein, but we are not aware of any theoretical results concerning this matter.

\subsection{Remembering the first vs.\ the last step}\label{dep}
There is a fundamental difference in behavior in these extreme cases, it is not just a matter of recalling some earlier step. Namely, it is a matter of comparing
\[
\xnp=\begin{cases} +X_{1},\ttt{with probability $p$,} \\-X_{1},\ttt{with probability $1-p$,}\end{cases}\]
with
\[
\xnp=\begin{cases} +X_{n},\ttt{with probability $p$,} \\-X_{n},\ttt{with probability $1-p$.}\end{cases}\]
In order to see the difference more clearly, let us imagine that $p$ is close to one.

In the first case every new step equals most likely the \emph{first\/} one, that is,
a typical path will then constist of an overwhelming amout of $X_1$:s interfoliated by an occasional $-X_1$.
In the second case every new step equals most likely the \emph{most recent\/} one, that is,
a typical path will  constist of an overwhelming amout of $X_1$:s followed by an overwhelming amount of $-X_1$:s, followed by ...., that is alternating long stretches of the same kind.

Moreover, since, in the first case, every new step is a function of just the first one, the independence structure does not come as a surprise, whereas in the second case the next step depends on the previous one, which in turn depends on its previous one, etcetera, which implies that the next step, in fact, depends on the whole path so far.

\subsection{Final Remarks}
\textbf{(i)}  We have seen that the more the elephant remembers the cumbersomer become the computations. However, once again, {\it in theory\/} it would be possible to compute higher order moments and thus, e.g., use the moment method to prove limit theorems. \\[1mm]
\textbf{(ii)} By using the device from Section \ref{allgemein} one can  extend all limit theorems for ERW:s to the case with general steps.\vsb

\renewcommand{\theequation}{A.\arabic{equation}}
\appendix
\section{Appendix}
\label{appa} \setcounter{equation}{0}

In this appendix we collect more technical calculations.

\subsection{Proof of Lemma \ref{momlemma}}\label{pfmomlemma}
Recall that even powers of $X_n$ are always equal to 1, and, moreover, that $X_n^k$=$X_n$ if $k$ is odd. One consequence of this is the following fact that will be used repeatedly below:
\bea\label{blandat3}
E\big((T_n)^{2m-k}(\xnp)^k\mid\cfn\big)&=&E\big((T_n)^{2m-k}E(\xnp\mid\cfn)\big)\nonumber\\
&=&\begin{cases} E(T_n)^{2m-k},&\ttt{if $k$ is even,}\\[2mm] 
E\big((T_n)^{2m-k}(2p-1)X_n\big),&\ttt{if $k$ is odd.}\end{cases}
\eea
 As mentioned in connection with the statement of Theorem \ref{clttn} we use induction. We  thus assume that we know that the moments up to order $2m-2$ converge properly, in particular we may choose $n$ so large that $|E(T_n/\sqrt{n})^k|\leq 2\mu_k$ for $k\leq 2m-2$, which, by symmetry, inplies that $|E(T_n)^k|\leq 4\mu_kn^{k/2}$, for $k\mbox{ even }\leq 2m-1$ and $\leq 4\veps n^{k/2}$, and for $k\mbox{ odd }\leq 2m-1$, for some $\veps$ small (recall that $\mu_k$ are the moments of the standard normal distribution as given in (\ref{nfmomente})).

\noindent
\pf{(\ref{87})}
\beaa&&\hskip-4pc
E\big((T_n)^{2m-1}X_n\mid\cfnm\big)=E\big((\tnm+X_n)^{2m-1}X_n\mid\cfnm\big)\\
&=&\sum_{k=0}^{2m-1}\binom{2m-1}{k}(T_{n-1})^{2m-1-k}\cdot E\big((X_n)^{k+1}\mid\cfnm\big)\\
&=&(\tnm)^{2m-1}E(X_n\mid\cfnm)+(2m-1)(\tnm)^{2m}\cdot E(X_n^2\mid\cfnm)+R_n(p,m)\\
&=&(\tnm)^{2m-1}(2p-1)X_{n-1}+(2m-1)(\tnm)^{2m}+R_n(p,m).\eeaa
Taking expectations on either side yields
\bea\label{87prel}
E\big((T_n)^{2m-1}X_n\big)=(2p-1)E((\tnm)^{2m-1}X_{n-1})+(2m-1)E(\tnm)^{2m-2}+E(R_n(p,m)).
\eea
Exploiting (\ref{blandat3}) yields a bound for the remainder:
\bea
|E(R_n(p,m))|
&\leq&|\sum_{k=3}^{2m-1}\binom{2m-1}{k}E\big((T_{n-1})^{2m-1-k}\cdot E((X_n)^{k+1}\mid\cfnm)\big)|\nonumber\\
&\leq&\sum_{k=3}^{2m-1}\binom{2m-1}{k}\,|E(T_{n-1})^{2m-1-k}|\cdot 1\nonumber\\
&\leq& C_p\cdot 2m\cdot\binom{2m-1}{m-1}\cdot4\mu_{2m-4}\cdot n^{m-2}=C(m,p)\cdot n^{m-2}.
\label{restestimate}
\eea
By iterating (\ref{87prel}) we then obtain that
\beaa
E\big((T_n)^{2m-1}X_n\big)&=&(2m-1)\sum_{k=1}^{n-1} (2p-1)^{n-1-k}E\big((T_k)^{2m-2}\\
&&\hskip3pc +\,R_{k+1}(p,m)/(2m+1)\big)+(2p-1)^{n-1}\\
&=&(2m-1)\frac{1}{2(1-p)}E(T_n)^{2m-2} +{\cal O}\big(n^{m-2}\big).   \eeaa

\noindent
\pf{(\ref{88})}
\beaa&&\hskip-4pc
E\big((T_n)^{2m}X_n\mid\cfnm\big)=E\big((\tnm+X_n)^{2m}X_n\mid\cfnm\big)\\
&=&\sum_{k=0}^{2m}\binom{2m}{k}(T_{n-1})^{2m-k}\cdot E\big((X_n)^{k+1}\mid\cfnm \big)\\
&=&(\tnm)^{2m}E(X_n\mid\cfnm)+2m(\tnm)^{2m-1}\cdot E(X_n^2\mid\cfnm)+R_n(p,m)\\
&=&(\tnm)^{2m}(2p-1)X_{n-1}+2m(\tnm)^{2m-1}+R_n(p,m).\eeaa
Taking expectations on either side yields
\[
E\big((T_n)^{2m}X_n\big)=(2p-1)E\big((\tnm)^{2m}X_{n-1}\big)+2mE(\tnm)^{2m-1}+E(R_n(p,m)).\]
The estimation of the remainder is the same as above. The remaining part of the proof follows the exact same lines and is therefore omitted.

Having estimates for the mixed moments we are now able to attack the ''pure'' moments. This will be done without explicit mentioning. Moreover, the estimates for the remainders, are, again, the same.

\noindent
\pf{(\ref{89})} 
\beaa&&\hskip-4pc
E\big((\tnp)^{2m}\mid\cfn\big)=\sum_{k=0}^{2m}\binom{2m}{k}(T_{n})^{2m-k}\cdot E\big((\xnp)^{k}\mid\cfn\big)\\
&=&(T_n)^{2m}+2m(2p-1)(T_n)^{2m-1}X_n+\binom{2m}{2}(T_n)^{2m-2}+R_n(p,m).\eeaa
Taking expectations on either side yields
\beaa
E(\tnp)^{2m}&=&E(T_n)^{2m}+2m(2p-1)E\big((T_n)^{2m-1}X_n\big)+\binom{2m}{2}E(T_n)^{2m-2}+E(R_n(p,m))\\
&=&E(T_n)^{2m}+2m(2p-1)\cdot(2m-1)\frac{1-(2p-1)^{n-1}}{2(1-p)}E(T_n)^{2m-2}\\&&\hskip3pc +\binom{2m}{2}E(T_n)^{2m-2}+ {\cal O}\big(n^{m-2}\big) \\
&=&E(T_n)^{2m} + \frac{m(2m-1)p}{1-p}\cdot E(T_n)^{2m-2}+ {\cal O}\big(n^{m-1}\big)\\
&=& E(T_n)^{2m} + \frac{m(2m-1)p}{1-p}\frac{(2m-2)!}{2^{m-1} (m-1)!} n^{m-1} \,
+\, {\cal O}\big( n^{m-2}\big),\\
\eeaa
by  the induction hypothesis. Summing up the differences leads to the desired result.\\[2mm]

\noindent
\pf{(\ref{810})} 
\beaa&&\hskip-4pc
E\big((\tnp)^{2m+1}\mid\cfn\big)=\sum_{k=0}^{2m+1}\binom{2m+1}{k}(T_{n})^{2m+1-k}\cdot E\big((\xnp)^{k}\mid\cfn\big)\\
&=&(T_n)^{2m+1}+(2m+1)(2p-1)(T_n)^{2m}X_n+\binom{2m+1}{2}(T_n)^{2m-1}+R_n(p,m).\eeaa
Taking expectations on either side yields
\beaa
E(\tnp)^{2m+1}&=&E(T_n)^{2m+1}+(2m+1)(2p-1)E\big((T_n)^{2m}X_n\big)\\&&\hskip3pc+\binom{2m+1}{2} E(T_n)^{2m-1}+E(R_n(p,m))\\
&=& E(T_n)^{2m+1}+(2m+1)(2p-1)C_1(p,m)\,n^{m-1}\\
&& \hspace*{2cm} +m(2m-1)C_2(p,m-1)\,n^{m-1}+\, {\cal O}\big( n^{m-2}\big)
\eeaa
by the induction hypothesis. 
Summing up, finally,  leads to the desired result with $C_2(p,m)
=\frac{(2m+1)(2p-1)}{m}\cdot C_1(p,m)+(2m-1)\cdot C_2(p,m-1)$.
\vsb

\subsection{Proof of Lemma \ref{lasttwo}}\label{pflemma91}  
Set $a=p-1/2 \in (-1/2,\,1/2)$. Then,
\[ E(X_1)=2a \ttt{and}  E(X_2)= E(X_2 \mid X_1)=2a E(X_1) =4 a^2.\]
For $n\ge 2$ we have
\[ \mu_{n+1} =E(X_{n+1})=E(E(X_{n+1}\mid {\cal F}_n))= E((2p-1)\,(X_{n-1}+X_n)/2)=a\,(\mu_{n-1}+\mu_n).\]
With $\lambda_{1/2}=(a\pm\sqrt{a^2+4a})/2$ (note that $|\lambda_{1/2}|<1$) this difference equation, with the two starting values $2a$ and $4a^2$, has, for $n\ge 1$, the solution
\[ \mu_n=E(X_n)=\frac{a(3a+\sqrt{a^2+4a})}{\sqrt{a^2+4a}} \lambda_1^{n-1}-\frac{a(3a-\sqrt{a^2+4a})}{\sqrt{a^2+4a}}\lambda_2^{n-1}.\]
For $p<1/2$ we have $\sqrt{a^2+4a}=i\sqrt{|a^2+4a|}$, but the solution is still real. Next,
\begin{eqnarray*}
E(S_n)&=&\sum_{k=1}^n \mu_k=\frac{a(3a+\sqrt{a^2+4a})}{\sqrt{a^2+4a}}\frac{1-\lambda_1^n}{1-(a+\sqrt{a^2+4a})/2}\\&&\hskip3pc-\frac{a(3a-\sqrt{a^2+4a})}{\sqrt{a^2+4a}}\frac{1-\lambda_2^n}{1-(a-\sqrt{a^2+4a})/2}\\
&\to& \frac{2a(3a+\sqrt{a^2+4a})}{\sqrt{a^2+4a}(2-a-\sqrt{a^2+4a})}- \frac{2a(3a-\sqrt{a^2+4a})}{\sqrt{a^2+4a}(2-a+\sqrt{a^2+4a})}\\
&=& \frac{2a(a+1)}{1-2a}= \frac{p^2-1/4}{1-p}\ttt{as}\nifi.
\end{eqnarray*}
The second moment is more tedious. We begin with
\[E(S^2_{n+1}\mid{\cal F}_n)=S_n^2 +(2p-1)S_n\, (X_{n-1}+X_n)+1,\]
and obtain
\begin{equation}\label{vn} v_{n+1}= E(S^2_{n+1})=v_n +(2p-1)E(S_nX_n+S_n X_{n-1})+1\,.\end{equation}
As for the mixed moments,
\[ E(S_n X_n\mid {\cal F}_{n-1})=a S_{n-1}X_{n-1} +a(S_{n-2}X_{n-2} +X_{n-1}X_{n-2})+1\,.\]
By the usual trick we find
\[ E(X_nX_{n-1})=a+a^2+\dots+a^{n-1}E(X_2X_1)\to \frac{a}{1-a}= \frac{2p-1}{3-2p}\ttt{as}\nifi.\]
With $\zeta_n=E(S_nX_n)$ we find that
\[\zeta_n=a(\mu_{n-1}+\mu_{n-2})+1+\frac{4a^2}{1-a} + {\cal O}(a^n),\]
from which it follows that $\zeta_n \to \frac{p^2-2p+7/4}{(1-p)(3-2p)}$, the stationary solution. 

 Next,
\[ E(S_nX_{n-1})= E(S_{n-1}X_{n-1})+E(X_nX_{n-1})=\zeta_{n-1}+\frac{a}{1-a} +{\cal O}(a^n).\]
We finally arrive, recalling (\ref{vn}), at
\[v_{n+1}=v_n+(2p-1)\left(\frac{2p^2-4p+7/2}{(1-p)(3-2p)}+\frac{2p-1}{3-2p}\right)+1+ o(1), \]
and thus, via telescoping, at
\[v_n\sim n \Big( 1+\frac{(2p-1)(5-2p)}{2(1-p)(3-2p)}\Big)\,.\]
\vsp

\subsection{Calculation of second moments in Section \ref{beidesa}}\label{2mom10} 
We first note that $E(T_1^2)=1$, and, by (\ref{uli5}), that, for $n\geq1$,
\bea\label{exptnzprel}
E(\tnpz)=E(T_n^2)+(2p-1)E(T_n) + (2p-1) E(T_nX_n)+1.\eea
At this point we have to pause and compute the mixed moments: We first note that $E(T_1X_1)=1$, and that
\[E(X_2X_1)=E\big(X_1 E(X_2\mid X_1)\big)=E(X_1(2p-1)X_1)=2p-1,\]
so that
\[E(T_2X_2\mid\cfn)=E(X_1X_2 +1)=2p-1+1  =2p.\]
For $n\geq 2$ we exploit (\ref{uli4}), (\ref{exptnn}), and the fact that $X_n^2=1$, to obtain
\beaa
E(\tnp\xnp)&=&\frac{2p-1}{2}\cdot E(T_nX_n) + \frac{2p-1}{2} E(T_n) +1\\
&=&\frac{2p-1}{2}\cdot E(T_nX_n)  + \frac{2p-1}{2}\Big( n\cdot\frac{2p-1}{3-2p} +\frac{8(1-p)}{(3-2p)^2} 
+o(1)\Big)+1\\
&=&\frac{2p-1}{2}\cdot E(T_nX_n) +\frac{(2p-1)^2}{2(3-2p)}\cdot n +\frac{4(2p-1)(1-p)}{(3-2p)^2} +1+o(1) .
\eeaa
Another application of Proposition \ref{diff}(i) then tells us that
\bea\label{blandat}
E(T_nX_n)&=&\bigg\{\frac{(2p-1)^2}{2(3-2p)}\cdot(n-1) + \frac{4(2p-1)(1-p)}{(3-2p)^2} +1\Big)\Big/
\Big(1-\frac{2p-1}{2}\Big) 
\nonumber\\
&&\hskip3pc -\frac{2p-1}{2}\cdot\frac{(2p-1)^2}{2(3-2p)}\cdot(n-1)\Big/\Big(\Big(1-\frac{2p-1}{2}\Big)^2
\bigg\}\cdot\big(1+o(1)\big) 
\nonumber\\
&=&\frac{(2p-1)^2}{(3-2p)^2}\cdot n +\frac{8(1+p-2p^2)}{(3-2p)^3}+o(1).
\eea
Hence, using (\ref{exptnzprel}), we obtain
\begin{eqnarray*}
E(T_{n+1}^2)&=& E(T_n)^2+(2p-1) \Big(n\cdot\frac{2p-1}{3-2p} +\frac{8(1-p)}{(3-2p)^2}\Big)\\
&&\hskip3pc + (2p-1)\Big(\frac{(2p-1)^2}{(3-2p)^2}\cdot n +\frac{8(1+p-2p^2)}{(3-2p)^3}\Big)+1+o(1)\\
&=&  E(T_n)^2+\frac{2(2p-1)^2}{(3-2p)^2}\cdot n + \frac{32(2p-1)(1-p)}{(3-2p)^3} +1+o(1),
\end{eqnarray*}
after which we, via telescoping, obtain that   
\begin{eqnarray}\label{var10}
E(T_n^2)&=&  \frac{(2p-1)^2}{(3-2p)^2}\cdot n^2-\frac{(2p-1)^2}{(3-2p)^2}\cdot n
+(n-1)\cdot \Big(1+ \frac{32(2p-1)(1-p)}{(3-2p)^3}\Big)+o(n). \nonumber\\
&=& \frac{(2p-1)^2}{(3-2p)^2}\cdot n^2+\Big(1+\frac{(2p-1)}{(3-2p)^3}(4p^2-40p+35)\Big)\cdot n +o(n).
\end{eqnarray}

\subsection{Calculation of second moments in Section \ref{beidesb}}\label{2mom11} 
The point of departure in this case is (\ref{exptnzprel2}), viz.,
\bea\label{zweimomapp}
E(\tnpz)=E(T_n^2)+2E(T_n\xnp)+1.
\eea
For the mixed moments we use (\ref{uli4}):
\[
E(T_{n}\xnp\mid \cgn)= \frac{2}{3} (2p-1) T_n +\frac{2p-1}{3} T_nX_n 
=  \frac{2}{3} (2p-1) T_n + \frac{2p-1}{3} T_{n-1} X_n +\frac{2p-1}{3}. \]
We thus find, using (\ref{exptnnn}), that for $n\ge 3$,  
\beaa
E(T_n\xnp)&=& \frac{2p-1}{3} E(T_{n-1}X_n) +\frac{2p-1}{3}+ \frac{2(2p-1)}{3}\Big(  n\cdot\frac{2p-1}{2-p} + \frac{3(1-p)(7-2p)}{2(2-p)^2}+o(1)\Big)\\
&=&\frac{2p-1}{3} E(T_{n-1}X_n)+\frac{2(2p-1)^2}{3(2-p)^2}\cdot n + \frac{2p-1)(1-p)(7-2p)}{(2-p)^2}
+\frac{2p-1}{3}+o(1).
\eeaa
Invoking Proposition \ref{diff}(i) then tells us that
\beaa
E(T_n\xnp)&=&\bigg\{\frac{2(2p-1)^2}{3(2-p)^2}\cdot n  + \frac{2p-1)(1-p)(7-2p)}{(2-p)^2}
+\frac{2p-1}{3}\Big) \Big/\Big(1-\frac{2p-1}{3}\Big) \\
&&\hskip3pc - \frac{2p-1}{3}\cdot\frac{2(2p-1)^2}{3(2-p)^2}\cdot n\Big/\Big((n+1)\Big(1-\frac{2p-1}{3}\Big)^2\bigg\}\cdot\big(1+o(1)\big)\\
&=&\frac{(2p-1)^2}{(2-p)^2}\cdot n +\frac{3(2p-1)}{(2-p)^3}\cdot(p^2-9p+8)+o(1)\,,
\eeaa
which, inserted into (\ref{zweimomapp}), yields
\[E(T_{n+1}^2)= E(T_n^2) +\frac{2(2p-1)^2}{(2-p)^2}\cdot n +\frac{3(2p-1)(p^2-9p+8)}{(2-p)^3}+1+ o(1),
\]
and, after summation,
\begin{eqnarray}\label{var11}
E(T_n^2)&=&n^2\cdot\frac{(2p-1)^2}{(2-p)^2}-n\cdot \frac{(2p-1)^2}{(2-p)^2}
+(n-1)\cdot\Big(1+\frac{3(2p-1)(p^2-9p+8)}{(2-p)^3}\Big) +o(n) \nonumber \\
&=& n^2\cdot\frac{(2p-1)^2}{(2-p)^2}+n\cdot\Big(1+\frac{(2p-1)}{(2-p)^3}\cdot(5p^2-32p+26)\Big)+o(n).
\end{eqnarray}

We, finally, turn our attention to the second moment for the case when $X_1\cdot X_2=-1$, where, again,
the mixed moment is first in focus. Now, $E(T_1X_1)=1$, $E(T_2X_2)=E(X_1X_2+X_2^2)=-1+1=0$, and
$E(T_3X_3)=E(T_2X_3+X_3)^2=0+1=1$. For $n\geq3$ we follow the usual pattern. Due to the fact that the mean is zero, an application of  (\ref{uli4}) now yields
\bea\label{etnpxnpnull}
E(T_n\xnp)= \frac{2p-1}{3} E(T_nX_n) = \frac{2p-1}{3} E(T_{n-1}X_n)+\frac{2p-1}{3},
\eea
which, together with Proposition \ref{diff}(i), tells us that
\bea\label{etnxnpp}
E(T_n\xnp)=\frac{\frac{2p-1}{3}}{1-\frac{2p-1}{3}}+o(1)=\frac{2p-1}{2(2-p) }+o(1)\ttt{as}\nifi.
\eea
Moving into second moments, $E(T_1^2)=1$, $E(T_2^2)=0$, and
$E(T_3^2)=E(X_3^2)=1$. For $n\geq3$ we insert our findings in (\ref{etnxnpp}) into (\ref{zweimomapp}):
\beaa
E(\tnpz)&=&E(T_n^2)+2E(T_n\xnp)+1=E(T_n)^2+\frac{2p-1}{2-p}+1+o(1)\\
&=&E(T_n^2)+\frac{1+p}{2-p}+o(1)\ttt{as}\nifi,
\eeaa
so that, via telescoping,
\bea\label{exptnzzzzz}
E(\tnz)=n\cdot\frac{1+p}{2-p}+o(n)=\var (\tnp)\ttt{as}\nifi.
\eea

\subsection*{Acknowledgement}

The results of this paper were initiated during U.S.'s visit in Uppsala in May 2018. U.S.\ wants to thank for the kind hospitality and we both wish to thank  Kungliga Vetenskapssamh\"allet i Uppsala for financial support.

\medskip\noindent {\small Allan Gut, Department of Mathematics,
Uppsala University, Box 480, SE-751\,06 Uppsala, Sweden;\\
Email:\quad \texttt{allan.gut@math.uu.se}\\
URL:\quad \texttt{http://www.math.uu.se/\~{}allan}}
\\[4pt]
{\small Ulrich Stadtm\"uller, Ulm University, Department of Number
Theory
and Probability Theory,\\ D-89069 Ulm, Germany;\\
Email:\quad \texttt{ulrich.stadtmueller@uni-ulm.de}\\
URL:\quad
\texttt{http://www.mathematik.uni-ulm.de/en/mawi/institute-of-number-theory-and-probability-\\theory/people/stadtmueller.html}}

\end{document}